\newtheorem{thm}{Theorem}
\newtheorem{cor}[thm]{Corollary}
\newtheorem{lem}[thm]{Lemma}
\newtheorem{prop}[thm]{Proposition}
\newtheorem{defin}[thm]{Definition}
\newtheorem{rem}[thm]{Remark}
\newcommand{\wal}{{\rm wal}}
\newcommand{\icomp}{\mathtt{i}}
\newcommand{\rd}{\,\mathrm{d}}
\newcommand{\bsj}{\boldsymbol{j}}
\newcommand{\bsx}{\boldsymbol{x}}
\newcommand{\bsy}{\boldsymbol{y}}
\newcommand{\bsz}{\boldsymbol{z}}
\newcommand{\bsk}{\boldsymbol{k}}
\newcommand{\bsm}{\boldsymbol{m}}
\newcommand{\bst}{\boldsymbol{t}}
\newcommand{\bsa}{\boldsymbol{a}}
\newcommand{\bsc}{\boldsymbol{c}}
\newcommand{\bss}{\boldsymbol{s}}
\newcommand{\bsr}{\boldsymbol{r}}
\newcommand{\bszero}{\boldsymbol{0}}
\newcommand{\bsone}{\boldsymbol{1}}
\newcommand{\RR}{\mathbb{R}}
\newcommand{\EE}{\mathbb{E}}
\newcommand{\uu}{\mathfrak{u}}
\newcommand{\vv}{\mathfrak{v}}
\newcommand{\CC}{\mathbb{C}}
\newcommand{\FF}{\mathbb{F}}
\newcommand{\NN}{\mathbb{N}}
\newcommand{\DD}{\mathbb{D}}
\newcommand{\ZZ}{\mathbb{Z}}
\newcommand{\cP}{\mathcal{P}}
\newcommand{\vecs}{\boldsymbol{\sigma}}
\newcommand{\bsell}{\boldsymbol{\ell}}
\newcommand{\calD}{\mathcal{D}}
\newcommand{\calB}{\mathcal{B}}
\begin{document}
\title{Point sets with optimal order of extreme and periodic discrepancy}
\author{Ralph Kritzinger and Friedrich Pillichshammer\thanks{The first author is supported by the Austrian Science Fund (FWF), Project F5509-N26, which is a part of the Special Research Program ``Quasi-Monte Carlo Methods: Theory and Applications''.}}
\date{}

\maketitle

\begin{abstract}
We study the extreme and the periodic $L_p$ discrepancy of point sets in the $d$-dimensional unit cube. The extreme discrepancy uses arbitrary sub-intervals of the unit cube as test sets, whereas the periodic discrepancy is based on periodic intervals modulo one. This is in contrast to the classical star discrepancy, which uses exclusively intervals that are anchored in the origin as test sets. In a recent paper the authors together with Aicke Hinrichs studied relations between the $L_2$ versions of these notions of discrepancy and presented exact formulas for typical two-dimensional quasi-Monte Carlo point sets. In this paper we study the general $L_p$ case and deduce the exact order of magnitude of the respective minimal discrepancy in the number $N$ of elements of the considered point sets, for arbitrary but fixed dimension $d$, which is $(\log N)^{(d-1)/2}$.   
\end{abstract}

\centerline{\begin{minipage}[hc]{130mm}{
{\em Keywords:} $L_p$ discrepancy, diaphony, digital nets  \\
{\em MSC 2020:} 11K38, 11K06, 11K31}
\end{minipage}}

\section{Introduction}

Let $\cP=\{\bsx_0,\bsx_1,\ldots,\bsx_{N-1}\}$ be an arbitrary $N$-element point set in the $d$-dimensional unit cube $[0,1)^d$.  For any measurable subset $B$ of $[0,1]^d$ the {\it counting function} $$A(B,\cP):=|\{n \in \{0,1,\ldots,N-1\} \ : \ \bsx_n \in B\}|$$ counts the number of elements from $\cP$ that belong to the set $B$. The {\it local discrepancy} of $\cP$ with respect to a given measurable ``test set'' $B$  is then given by $$A(B,\cP)-N \lambda (B),$$ where $\lambda$ denotes the Lebesgue measure of $B$. A global discrepancy measure is then obtained by considering a norm of the local discrepancy with respect to a fixed class of test sets. 

The classical {\it star $L_p$ discrepancy} uses as test sets exclusively the class of axis-parallel squares that are anchored in the origin. The formal definition is 
$$ L_{p,N}^{{\rm star}}(\cP):=\left(\int_{[0,1]^d}\left|A([\bszero,\bst),\cP)-N\lambda([\bszero,\bst))\right|^p\rd \bst\right)^{1/p},  $$
where for $\bst=(t_1,t_2,\ldots,t_d)\in [0,1]^d$ we set $[\bszero,\bst)=[0,t_1)\times [0,t_2)\times \ldots \times [0,t_d)$ with area $\lambda([\bszero,\bst))=t_1t_2\cdots t_d$.

The {\it extreme $L_p$ discrepancy} uses as test sets arbitrary axis-parallel rectangles contained in the unit square. For $\bsx=(x_1,x_2,\ldots,x_d)$ and $\bsy=(y_1,y_2,\ldots,y_d)$ in $[0,1]^d$ and $\bsx \leq \bsy$ let $[\bsx,\bsy)=[x_1,y_1)\times [x_2,y_2) \times \ldots \times [x_d,y_d)$, where $\bsx \leq \bsy$ means $x_j\leq y_j$ for all $j \in \{1,2,\ldots,d\}$. The extreme $L_p$ discrepancy of $\cP$ is then defined as
$$L_{p,N}^{\mathrm{extr}}(\cP):=\left(\int_{[0,1]^d}\int_{[0,1]^d,\, \bsx\leq \bsy}\left|A([\bsx,\bsy),\cP)-N\lambda([\bsx,\bsy))\right|^p\rd \bsx\rd\bsy\right)^{1/p}.  $$
Note that the only difference between standard and extreme $L_p$ discrepancy is the use of anchored and arbitrary rectangles in $[0,1]^d$, respectively. 

The {\it periodic $L_p$ discrepancy} uses periodic rectangles as test sets, which are defined as follows: For $x,y\in [0,1]$ set
$$ I(x,y)=\begin{cases}
           [x,y) & \text{if $x\leq y$}, \\
           [0,y)\cup [x,1)& \text{if $x>y$,}
          \end{cases}$$
and for $\bsx,\bsy$ as above we set $B(\bsx,\bsy)=I(x_1,y_1)\times I(x_2,y_2) \times \ldots \times I(x_d,y_d)$.
We define the periodic $L_p$ discrepancy of $\cP$ as
$$  L_{p,N}^{\mathrm{per}}(\cP):=\left(\int_{[0,1]^d}\int_{[0,1]^d}\left|A(B(\bsx,\bsy),\cP)-N\lambda(B(\bsx,\bsy))\right|^p\rd \bsx\rd\bsy\right)^{1/p}.  $$

All three notions of discrepancy are quantitative measures for the irregularity of distribution of point sets modulo one. Also, these discrepancies have close relations to the worst-case integration error of quasi-Monte Carlo rules in suitable reproducing kernel Hilbert spaces of functions over $[0,1]^d$; see, e.g., \cite{HKP20} and the references therein. In the case $p=2$ and $d=2$ these discrepancies have been studied in the recent paper \cite{HKP20}. The main results are exact formulas for the respective $L_2$ discrepancies for the Hammersley point set (see \cite[Theorem~8]{HKP20}) and for rational lattices (see \cite[Theorem~10]{HKP20}). 

\paragraph{Relations.} Note that due to the monotonicity of the $L_p$-norm we have for $1 \le p \le q < \infty$ for every $N$-element point set $\cP$ in $[0,1)^d$ that $$L_{p,N}^{\bullet}(\cP) \le L_{q,N}^{\bullet}(\cP),\quad \mbox{where $\bullet \in \{ {\rm star},{\rm extr},{\rm per}\}$.}$$

The following bounds and relations between star, extreme and periodic discrepancies are known (see \cite[Sections~1 and 2]{HKP20}):  For every $N$-element point set $\cP$ in $[0,1)^d$ we have 
$$L_{2,N}^{{\rm extr}}(\cP) \le L_{2,N}^{{\rm per}}(\cP) \quad \mbox{ and }\quad  L_{2,N}^{{\rm extr}}(\cP) \le L_{2,N}^{{\rm star}}(\cP).$$ The left relation can be easily generalized for arbitrary $p \ge 1$ to 
\begin{equation}\label{rel:exper}
L_{p,N}^{{\rm extr}}(\cP) \le L_{p,N}^{{\rm per}}(\cP).
\end{equation}
Furthermore, there exists a quantity $c_d>0$ such that for every $N$-element point set $\cP$ in $[0,1)^d$ we have 
\begin{equation}\label{lbdl2ext}
c_d (1+\log N)^{\frac{d-1}{2}} \le L_{2,N}^{{\rm extr}}(\cP)\le \left\{
\begin{array}{l}
 L_{2,N}^{{\rm per}}(\cP),\\[0.5em]
  L_{2,N}^{{\rm star}}(\cP).
\end{array}\right.
\end{equation}
It is also well-known that for every $p \in (1,\infty)$ there exist $N$-element point sets $\cP$ in $[0,1)^d$ with the optimal order of star $L_p$ discrepancy, i.e., with 
\begin{equation}\label{optordstdlp}
L_{p,N}^{{\rm star}}(\cP) \le C_{d,p} (1+\log N)^{\frac{d-1}{2}},
\end{equation}
where the positive quantity $C_{d,p}$ may only depend on $d$ and $p$ but is independent of $N$. Explicit constructions are known and comprise certain digital nets introduced by Chen and Skriganov~\cite{CS02} and Skriganov~\cite{S06} and higher order digital nets~\cite{DHMP1,DP14a,DP14b,Marnets}.

So for $p=2$ the minimal extreme $L_p$ discrepancy for $N$-element point sets in dimension $d$ is of exact order of magnitude $(1+\log N)^{\frac{d-1}{2}}$. In more detail 
\begin{equation}\label{optordL2extr}
\inf_{\cP \subseteq [0,1)^d \atop |\cP|=N} L_{2,N}^{{\rm extr}}(\cP) \asymp_{d} (1+\log N)^{\frac{d-1}{2}}.
\end{equation}
It is well-known that the same order of magnitude holds true for the star $L_p$ discrepancy for any $p \in [2,\infty)$, see, e.g., \cite{S06}. For the periodic discrepancy we only know the lower bound 
\begin{equation}\label{lbd:Lper}
\inf_{\cP \subseteq [0,1)^d \atop |\cP|=N} L_{2,N}^{{\rm per}}(\cP) \gtrsim_d (1+\log N)^{\frac{d-1}{2}};
\end{equation}
see, e.g., \cite[Corollary~2]{HKP20} or \cite{lev99}. A matching upper bound of the same order of magnitude in arbitrary but fixed dimension is still missing (for more on this we refer to Remark~\ref{re:lev} in Section~\ref{sec:perdisc}). 

\paragraph{Aim.} In this paper we continue our study of extreme and periodic discrepancy of point sets in dimension $d$. In Section~\ref{sec:extrdisc} we study the extreme $L_p$ discrepancy. The main tool is a Haar series approach in conjunction with the Littlewood-Paley inequality. We show that for $p\in (1,\infty)$ the exact order of the minimal extreme $L_p$ discrepancy for $N$-element point sets in dimension $d$ is again $(1+\log N)^{\frac{d-1}{2}}$ (see Theorem~\ref{thm:optordLpextr}).  In dimension $2$ we can even extend this result to $p=1$ (see Theorem~\ref{thm:l1} and Corollary~\ref{cor:optordLpextr}). Furthermore, as a byproduct, we find some easy proofs of already known relations between standard and extreme discrepancy without going a detour to numerical integration in function spaces as done in \cite{HKP20}. Finally, with the help of the Haar series expansion of the extreme discrepancy we can easily amplify our results about extreme discrepancy of $2$-dimensional point sets beyond the Hammersley net. We prove exact formulas for the extreme $L_2$ discrepancy of various digital nets in dimension 2 and show upper bounds on the extreme $L_p$ discrepancy for the same nets (see Theorem~\ref{2dnets}). The periodic discrepancy is studied in Section~\ref{sec:perdisc}. The lower bound of order of magnitude $(1+\log N)^{\frac{d-1}{2}}$ is easily established and follows directly from the corresponding result for the extreme $L_p$ discrepancy (see Corollary~\ref{cor:lowper}). Then we show that also the minimal periodic $L_2$ discrepancy ($p=2$) is of exact asymptotic order $(1+\log N)^{\frac{d-1}{2}}$ like the star and the extreme $L_p$ discrepancies and the optimal order of magnitude can be achieved by means of digitally shifted digital nets (see Theorem~\ref{thm12} and Corollary~\ref{co16}). The technical tool for this result is based on a Walsh expansion of the periodic $L_2$ discrepancy. A corresponding result beyond the case $p \le 2$ is still missing. A consequence for the diaphony is stated in Remark~\ref{re:dia}.

In Section~\ref{sec:sum} we briefly summarize the main results presented in this paper and we also collect some open problems.

The constructions of point sets studied in this paper are based on the concept of digitally shifted digital nets. We will recall the basic definitions in Section~\ref{sec:nets}. Readers who are familiar with this topic may move directly to Section~\ref{sec:extrdisc}.

\paragraph{Basic notations.} We denote by $\NN$ the set of positive integers and by $\NN_0$ the set of nonnegative integers, i.e., $\NN_0=\NN \cup \{0\}$. For $d \in \NN$ we write $[d]:=\{1,2,\ldots,d\}$. Subsets of $[d]$ are denoted by $\uu$ or $\vv$. The complement of a subset $\uu \subseteq [d]$ is $\uu^c= [d]\setminus \uu$.

For functions $f,g:D\subseteq\NN \rightarrow \RR^+$ we write $g(N) \lesssim f(N)$ (or $g(N) \gtrsim f(N)$),
if there exists a $C>0$ such that $g(N) \le C f(N)$ (or $g(N) \ge C f(N)$) for all $N \in D$. If we would like to stress that the quantity $C$ may also depend on other variables than $N$, say $\alpha_1,\ldots,\alpha_w$,
this will be indicated by writing $\lesssim_{\alpha_1,\ldots,\alpha_w}$ (or $\gtrsim_{\alpha_1,\ldots,\alpha_w}$). Furthermore, we also use $f(N) \asymp g(N)$ which means that $f(N) \lesssim g(N)$ and $f(N) \gtrsim g(N)$ simultaneously.

\section{Digital nets over $\ZZ_b$}\label{sec:nets}

The constructions of point sets considered in this paper are based on the concept of digital nets which goes back to Niederreiter~\cite{nie87,niesiam}. There is a whole theory on this type of constructions. Here we only present the most basic definitions needed for the present paper. More information about digital nets can be found in the books \cite{DP10, LeoPi,niesiam}.  

Throughout this paper let $b$ be a prime number and let $\ZZ_b$ denote the finite field with $b$ elements. We identify $\ZZ_b$ with the set $\{0,1,\ldots,b-1\}$ equipped with arithmetic operations modulo $b$.

\begin{defin}\label{def2}\rm
Let $d,m \in \NN$ and $t \in \{0,1,\ldots,m\}$. Choose $d$  $m \times m$ matrices $C_1,\ldots ,C_d$ over $\ZZ_b$ with the
following property: for any integers $\ell_1,\ldots ,\ell_d \in \NN_0$ with $\ell_1+\cdots+\ell_d=m-t$ the system of the 
\begin{center}
\begin{tabbing}
\hspace*{3cm}\=first \= $\ell_1$ rows of $C_1$, \=  together with the\\
\>\hspace{1cm}$\vdots$ \\
\> first \> $\ell_{d-1}$ rows of $C_{d-1}$, \=  together with the \\
\>first \> $\ell_d$ rows of $C_d$ 
\end{tabbing}
\end{center}
is linearly independent over $\ZZ_b$.

Consider the following construction principle for point sets consisting of $b^m$ points in $[0,1)^d$: represent $n \in \{0,1,\ldots,b^m -1\}$ in base $b$, $n=n_0+n_1 b+\cdots +n_{m-1} b^{m-1}$, where $n_j \in \ZZ_b$ for $j \in \{0,1,\ldots,m-1\}$, and multiply the matrix $C_j$, $j \in [d]$, with the digit-vector $\vec{n} = (n_0,\ldots,n_{m-1})^{\top}$,
\begin{eqnarray}
C_j \vec{n}=:(y_1^{(j)},\ldots ,y_m^{(j)})^{\top} \in \ZZ_b^m\nonumber.
\end{eqnarray} 
Now set 
\begin{eqnarray}
x_n^{(j)}:=\frac{y_1^{(j)}}{b}+ \cdots +\frac{y_m^{(j)}}{b^m} \nonumber
\end{eqnarray}
and
\begin{eqnarray}
\bsx_n = (x_n^{(1)}, \ldots ,x_n^{(d)}).\nonumber
\end{eqnarray}
The point set $\{\bsx_0,\bsx_1,\ldots,\bsx_{b^m-1}\}$ is called a {\it digital $(t,m,d)$-net over $\ZZ_b$} and the matrices $C_1,\ldots,C_d$ are called the {\it generating matrices} of the digital net.
\end{defin}

We will also consider digitally shifted digital nets. In the following we introduce the digital shift of depth $m$ for the one dimensional case. For higher dimensions each coordinate is shifted independently and therefore one just needs to apply the one dimensional shifting method to each coordinate independently. 

Let a point set $\cP =  \{x_0, \ldots, x_{N-1}\}$ in $[0,1)$ be given. Let $$x_n = \frac{x_{n,1}}{b} + \frac{x_{n,2}}{b^2} + \cdots+\frac{x_{n,m}}{b^m}$$ be the binary digit expansion of $x_{n}$. Choose digits $\sigma_{1}, \ldots, \sigma_{m} \in \ZZ_b$ i.i.d.  Then define $$z_{n,i} \equiv x_{n,i} + \sigma_{i} \pmod{b} \qquad \mbox{ for } i \in \{1,\ldots,m\}$$ with $z_{n,i} \in \ZZ_b$. Further, for $n \in \{0,1,\ldots, N-1\}$, choose $\delta_n  \in [0,\frac{1}{b^m})$ i.i.d. Then the randomized point set $\widetilde{\cP} = \{z_0,\ldots,z_{N-1}\}$ is given by $$z_{n} =  \frac{z_{n,1}}{b} + \cdots + \frac{z_{n,m}}{b^m} + \delta_n.$$ This means we apply the same digital shift to the first $m$ digits, whereas the following digits are shifted independently for each $x_{n}$. This kind of shift is sometimes referred to as a digital shift of depth $m$ (see, e.g., \cite{DP05}) and can be seen as a special case of the random $b$-ary digit-scrambling of depth $m$ from \cite[Section~2.4]{matou}. 

We will study the periodic $L_2$ discrepancy of digitally shifted digital nets in Section~\ref{sec:perdisc}. Sometimes, especially in dimension 2, the $\delta_n$ is not needed and it suffices to consider digitally shifted digital nets of the form $\widehat{\cP}=\{z_0,\ldots,z_{b^m-1}\}$ given by $$z_{n} =  \frac{z_{n,1}}{b} + \cdots + \frac{z_{n,m}}{b^m}.$$ Such nets will be also considered in Section~\ref{sec:extrdisc}.

\section{The extreme discrepancy}\label{sec:extrdisc}

Let $\cP$ be a set of $N$ points in $[0,1)^d$. For $\bsx\in[0,1]^d$ define
$$D(\bsx):=A([\bszero,\bsx),\cP)-N\lambda([\bszero,\bsx))$$ and for 
$\bsx,\bsy\in[0,1]^d$ with $\bsx\leq \bsy$ define
$$ \widetilde{D}(\bsx,\bsy)=A([\bsx,\bsy),\cP)-N\lambda([\bsx,\bsy)).$$

Obviously the interval $[\bsx,\bsy)$ can be written as a union and difference of certain intervals anchored in the origin. This leads to the following connection between the unanchored discrepancy function $\widetilde{D}$ and the anchored version $D$. To state this relation, we introduce some notation. For $\bsx=(x_1,\ldots,x_d)$, $\bsy=(y_1,\ldots,y_d)$ and $\uu \subseteq [d]$ we define the point $(\bsx_{\uu},\bsy_{\uu^c}):=(z_1,\ldots,z_d)$, where for $j \in [d]$ we put 
$$z_j=\left\{
\begin{array}{ll}
x_j & \mbox{ if $j \in \uu$,}\\
y_j & \mbox{ if $j \not\in \uu$.}
\end{array}\right.$$
Then, for $\bsx\leq \bsy$ we have
\begin{equation}\label{dtiledg}
    \widetilde{D}(\bsx,\bsy)=\sum_{\uu \subseteq [d]} (-1)^{|\uu|} 
            D((\bsx_{\uu},\bsy_{\uu^c})).
\end{equation}
We extend the right-hand side of \eqref{dtiledg} to arbitrary $\bsx,\bsy\in [0,1]^d$ (i.e., we do not demand $\bsx\leq \bsy$) and write 
$$g(\bsx,\bsy)=\sum_{\uu \subseteq [d]}(-1)^{|\uu|} D((\bsx_{\uu},\bsy_{\uu^c})).$$ For example, for $d=2$ we have $$g(x_1,x_2,y_1,y_2)=D(x_1,x_2)-D(x_1,y_2)-D(y_1,x_2)+D(y_1,y_2).$$ 

Note that for every $\vv \subseteq [d]$ we have 
\begin{equation}\label{relg}
(-1)^{|\vv|} g((\bsx_{\vv^c},\bsy_{\vv}),(\bsx_{\vv},\bsy_{\vv^c}))= g(\bsx,\bsy).
\end{equation}
 This can be seen in the following way: fix $\vv \subseteq [d]$ and take an arbitrary vector $(\bsx_{\uu},\bsy_{\uu^c})$ with $\uu \subseteq [d]$. Now exchange $|\vv|$ many components $x_j \leftrightarrow y_j$ for $j \in \vv$. Write $\vv=\vv_1 \cup \vv_2$ where $\vv_1 \subseteq \uu$ and $\vv_2 \subseteq \uu^c$. Then $(\bsx_\uu,\bsy_{\uu^c})$ in the definition of $g(\bsx,\bsy)$ corresponds to $$(\bsx_{\uu\setminus \vv_1 \cup \vv_2},\bsy_{\uu^c\setminus \vv_2 \cup \vv_1})$$ in the definition of $g((\bsx_{\vv^c},\bsy_{\vv}),(\bsx_{\vv},\bsy_{\vv^c}))$. But there might be a difference in the corresponding sign, since $D((\bsx_{\uu},\bsy_{\uu^c}))$ appears with the sign $(-1)^{|\uu|}$ in $g(\bsx,\bsy)$, whereas $D(\bsx_{\uu\setminus \vv_1 \cup \vv_2},\bsy_{\uu^c\setminus \vv_2 \cup \vv_1})$ has the sign $(-1)^{|\uu\setminus \vv_1 \cup \vv_2|}$ in the definition of $g((\bsx_{\vv^c},\bsy_{\vv}),(\bsx_{\vv},\bsy_{\vv^c}))$. However, we have in any case that $(-1)^{|\uu\setminus \vv_1 \cup \vv_2|}=(-1)^{|\uu| + |\vv|}$, because
\begin{eqnarray*}
|\uu\setminus \vv_1 \cup \vv_2| & = & |\uu\setminus \vv_1| + |\vv_2|\\
& = & |\uu|-|\vv_1|+|\vv_2|\\
& = & |\uu|-2 |\vv_1|+|\vv|\\
&\equiv & |\uu| + |\vv| \pmod{2}.
\end{eqnarray*} 
and from this we observe easily the relation \eqref{relg}.

From \eqref{relg} together with basic rules for integrals it follows that
\begin{eqnarray} \label{relate}
 L_{p,N}^{\mathrm{extr}}(\cP) & = & \left( \int_{[0,1]^d}\int_{[0,1]^d,\, \bsx\leq \bsy} |\widetilde{D}(\bsx,\bsy)|^p\rd\bsx\rd\bsy\right)^{1/p}\nonumber \\
 & = & \left(\frac{1}{2^d} \int_{[0,1]^d}\int_{[0,1]^d} |g(\bsx,\bsy)|^p\rd\bsx\rd\bsy\right)^{1/p}.
\end{eqnarray}   

Our aim is now to represent the extreme discrepancy in terms of Haar functions. We introduce the basic definitions in the following paragraphs. 

\paragraph{The Haar function system (in base $2$).} 

To begin with, a {\it dyadic interval} of length $2^{-j}, j\in {\mathbb N}_0,$ in $[0,1)$ is an interval of the form 
$$ I=I_{j,m}:=\left[\frac{m}{2^j},\frac{m+1}{2^j}\right) \ \ \mbox{for } \  m=0,1,\ldots,2^j-1.$$ We also define $I_{-1,0}:=[0,1)$.
The left and right half of $I=I_{j,m}$ are the dyadic intervals $I^+ = I_{j,m}^+ =I_{j+1,2m}$ and $I^- = I_{j,m}^- =I_{j+1,2m+1}$, respectively. For $j\in\NN_0$, the {\it Haar function} $h_I = h_{j,m}$ with support $I$ 
is the function on $[0,1)$ which is  $+1$ on the left half of $I$, $-1$ on the right half of $I$ and 0 outside of $I$. The $L_\infty$-normalized {\it Haar system} consists of
all Haar functions $h_{j,m}$ with $j\in{\mathbb N}_0$ and  $m\in \{0,1,\ldots,2^j-1\}$ together with the indicator function $h_{-1,0}$ of $[0,1)$.
Normalized in $L_2([0,1))$ we obtain the {\it orthonormal Haar basis} of $L_2([0,1))$. A crucial property of Haar functions we use in the following is
$$ \int_0^1 h_{j,m}(t)\rd t=\begin{cases}
                    1 & \text{if $j=-1$,} \\
                    0 & \text{if $j\in \NN_0$,}
                           \end{cases}$$

Let ${\mathbb N}_{-1}:=\{-1,0,1,2,\ldots\}$ and define ${\mathbb D}_j:=\{0,1,\ldots,2^j-1\}$ for $j\in{\mathbb N}_0$ and ${\mathbb D}_{-1}:=\{0\}$.
For $\bsj=(j_1,\dots,j_d)\in{\mathbb N}_{-1}^d$ and $\bsm=(m_1,\dots,m_d)\in {\mathbb D}_{\bsj} :={\mathbb D}_{j_1}\times \ldots \times {\mathbb D}_{j_d}$, 
the {\it Haar function} $h_{\bsj,\bsm}$
is given as the tensor product 
$$h_{\bsj,\bsm} (\bsx) := h_{j_1,m_1}(x_1)\, \cdots \, h_{j_d,m_d}(x_d) \ \ \ \mbox{ for } \bsx=(x_1,\dots,x_d)\in[0,1)^d.$$
The boxes $$I_{\bsj,\bsm} := I_{j_1,m_1} \times \ldots \times I_{j_d,m_d}$$ are called {\it dyadic boxes}.

The $L_\infty$-normalized tensor {\it Haar system} consists of all Haar functions $h_{\bsj,\bsm}$ with $\bsj\in{\mathbb N}_{-1}^d$ and  $\bsm \in {\mathbb D}_{\bsj}$. Normalized in $L_2([0,1)^d)$ we obtain the {\it orthonormal Haar basis} of $L_2([0,1)^d)$. 
  
Let $f\in L_2([0,1]^d)$. Then Parseval's identity states that $$ \|f\|_{L_2([0,1]^d)}=\sum_{\bsj \in \NN_{-1}^d} 2^{-|\bsj|}\sum_{\bsm\in\DD_{\bsj}}|\mu_{\bsj,\bsm}(f)|^2, $$  where for $\bsj\in\NN_{-1}^d$ and $\bsm\in\DD_{\bsj}$ the numbers
  $$ \mu_{\bsj,\bsm}(f):=\langle f,h_{\bsj,\bsm}\rangle=\int_{[0,1]^d} f(\bsx)h_{\bsj,\bsm}(\bsx)\rd\bsx $$  are called the Haar coefficients of $f$.

\paragraph{The Haar coefficients of the function $g$.}

Let $\bsj=(j_1,\dots,j_d,k_1,\dots,k_d)\in \NN_{-1}^{2d}$ and $\bsm=(m_1,\dots,m_{2d})\in\DD_{\bsj}$. We compute the Haar coefficient $\mu_{\bsj,\bsm}(g)$ of $g$ which is defined as
  $$ \mu_{\bsj,\bsm}(g):=\int_{[0,1]^{d}}\int_{[0,1]^{d}} g(\bsx,\bsy)h_{\bsj,\bsm}(\bsx,\bsy)\rd \bsx\rd\bsy, $$
  where $g(\bsx,\bsy)=g(x_1,\dots,x_d,y_1,\dots,y_d)$ and $$h_{\bsj,\bsm}(\bsx,\bsy)=h_{j_1,m_1}(x_1)\cdots h_{j_d,m_d}(x_d)h_{k_1,m_{d+1}}(y_1)\cdots h_{k_d,m_{2d}}(y_d).$$
  
\begin{lem} \label{mainlemma}
Let $\bsj\in\NN_{-1}^{2d}$ such that exactly $d$ components of $\bsj$ are $-1$ and more precisely we have the following: for every $i\in[d]$ we either have $j_i=-1$ or $k_i=-1$. Let $\bsj'=(j_1',\dots,j_d')\in\NN_0^d$ such that for all $i\in[d]$ we have $j_i'=\max\{j_i,k_i\}$ (i.e. it is the number $j_i$ or $k_i$ which is not -1). Let further $\bsm'=(m_1',\dots,m_d')$ be such that $m_i'=m_i$ if $j_i'=j_i$ and $m_i'=m_{i+d}$ otherwise. Then we have $$|\mu_{\bsj,\bsm}(g)|=|\mu_{\bsj',\bsm'}(D)|.$$ In all other cases for $\bsj\in\NN_{-1}^{2d}$ we have $\mu_{\bsj,\bsm}(g)=0$.
\end{lem}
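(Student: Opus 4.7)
The plan is to expand $g(\bsx,\bsy)$ via its defining sum $\sum_{\uu\subseteq[d]}(-1)^{|\uu|}D((\bsx_\uu,\bsy_{\uu^c}))$, plug this into the defining integral for $\mu_{\bsj,\bsm}(g)$, and then evaluate the resulting $2d$-fold integral coordinate-by-coordinate by Fubini. The key observation is that each summand $D((\bsx_\uu,\bsy_{\uu^c}))$ depends, at each coordinate axis $i\in[d]$, on exactly one of the two variables $x_i,y_i$: on $x_i$ if $i\in\uu$ and on $y_i$ if $i\in\uu^c$. The Haar factor, however, contains both $h_{j_i,m_i}(x_i)$ and $h_{k_i,m_{i+d}}(y_i)$, so at coordinate $i$ the variable not appearing in $D$ integrates out against its Haar factor; this idle factor contributes $1$ if the corresponding index is $-1$ and $0$ otherwise, by the stated orthogonality $\int_0^1 h_{j,m}(t)\rd t=1$ for $j=-1$ and $=0$ for $j\in\NN_0$.

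Consequently, for fixed $\uu$ the $\uu$-term of $\mu_{\bsj,\bsm}(g)$ is nonzero only if $k_i=-1$ for every $i\in\uu$ and $j_i=-1$ for every $i\in\uu^c$. Under the hypothesis of the lemma (exactly one of $j_i,k_i$ equals $-1$ for each $i\in[d]$) there is precisely one admissible subset, namely $\uu^\star:=\{i\in[d]:k_i=-1\}$; for any other $\uu$ at least one offending coordinate forces the integral to vanish. For $\uu=\uu^\star$ the remaining integrand is $D(\bsz)\prod_{i=1}^d h_{j_i',m_i'}(z_i)$, where $z_i=x_i$ if $i\in\uu^\star$ and $z_i=y_i$ otherwise; by the very definitions of $\bsj'$ and $\bsm'$ this integral equals $\mu_{\bsj',\bsm'}(D)$, and the overall sign $(-1)^{|\uu^\star|}$ disappears when passing to absolute values.

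For the second assertion I distinguish the two ways the hypothesis can fail. If some coordinate $i$ has $j_i\neq -1$ and $k_i\neq -1$, then no $\uu$ is admissible in the sense above, so every summand vanishes. If instead some $i$ has both $j_i=-1$ and $k_i=-1$, I pair the summand indexed by $\uu$ with that indexed by $\uu\triangle\{i\}$: since $h_{-1,0}\equiv 1$ on $[0,1)$, the two corresponding integrands differ only by the relabelling $x_i\leftrightarrow y_i$ of the integration variable at coordinate $i$, which leaves the integral unchanged; but the two summands carry opposite signs $(-1)^{|\uu|}$ and $(-1)^{|\uu|+1}$, so they cancel pairwise, and again $\mu_{\bsj,\bsm}(g)=0$.

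The argument is essentially bookkeeping about which Haar factors integrate to $0$ and which to $1$. I expect the only mildly delicate point to be the sign-cancellation step when some coordinate carries $j_i=k_i=-1$, together with the double-check that the index translation $(\bsj,\bsm)\mapsto(\bsj',\bsm')$ matches the convention of the lemma in the surviving term $\uu=\uu^\star$.
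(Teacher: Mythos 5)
Your proof is correct and follows essentially the same route as the paper's: expand $g$ over $\uu\subseteq[d]$, integrate out the idle variable at each coordinate using $\int_0^1 h_{j,m}(t)\rd t=0$ for $j\in\NN_0$ and $=1$ for $j=-1$, identify the unique surviving subset $\uu=\{i:k_i=-1\}$, and cancel the paired terms $\uu\leftrightarrow\uu\triangle\{i\}$ when some coordinate has $j_i=k_i=-1$. Your organization of the vanishing cases by which coordinate-level condition fails is, if anything, slightly more explicit than the paper's split by the number $\kappa$ of $-1$-components, but the mechanics are identical.
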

  
\begin{proof}
Let $\bsj\in\NN_{-1}^{2d}$. Assume that $\kappa\in\{0,1,\dots,2d\}$ components of $\bsj$ are $-1$. We have $$ \mu_{\bsj,\bsm}(g)=\sum_{\uu \subseteq [d]} (-1)^{|\uu|}\left\langle D((\bsx_{\uu},\bsy_{\uu^c})),h_{\bsj,\bsm} \right\rangle,$$ where $$\left\langle D((\bsx_{\uu},\bsy_{\uu^c})),h_{\bsj,\bsm} \right\rangle = \int_{[0,1]^d} \int_{[0,1]^d} D((\bsx_{\uu},\bsy_{\uu^c})) h_{\bsj,\bsm}(\bsx,\bsy) \rd \bsx \rd \bsy.$$
    
It is clear that for $\kappa<d$ we have $\left\langle D((\bsx_{\uu},\bsy_{\uu^c})),h_{\bsj,\bsm} \right\rangle=0$ for all $\uu \subseteq [d]$. 

Next, let $\kappa>d$. Then, according to the  pigeon-hole principle there exists an $i\in [d]$ such that $j_i=k_i=-1$. Fix such an index $i$. Then we have
\begin{align*}
\mu_{\bsj,\bsm}(g)=&\sum_{\uu \subseteq [d]} (-1)^{|\uu|}\left\langle D((\bsx_{\uu},\bsy_{\uu^c})),h_{\bsj,\bsm} \right\rangle \\
=& \sum_{\uu \subseteq [d] \setminus \{i\}} \left[(-1)^{|\uu|+1}\left\langle D((\bsx_{\uu \cup \{i\}},\bsy_{\uu^c})),h_{\bsj,\bsm} \right\rangle +(-1)^{|\uu|}\left\langle D((\bsx_{\uu},\bsy_{\uu^c \cup \{i\}})),h_{\bsj,\bsm} \right\rangle \right]\\
=& 0,
\end{align*}
since obviously
\begin{align*}
\left\langle D((\bsx_{\uu \cup \{i\}},\bsy_{\uu^c})),h_{\bsj,\bsm} \right\rangle = \left\langle D((\bsx_{\uu},\bsy_{\uu^c \cup \{i\}})),h_{\bsj,\bsm} \right\rangle.
\end{align*}
This can be immediately seen observing that the integral over $y_i$ in the first Haar coefficient has no effect (delivers a factor 1). The same is true for $x_i$ in the second Haar coefficient. Now relabelling the variable $y_i$ to $x_i$ in the second integral yields the above identity. 

Finally, consider the case that $\kappa=d$. Let $\bsj\in\NN_{-1}^{2d}$ be such that for every $i\in [d]$ we either have $j_i=-1$ or $k_i=-1$. Then there is exactly one subset $\uu \subseteq [d]$ such that $\left\langle D((\bsx_{\uu},\bsy_{\uu^c})),h_{\bsj,\bsm} \right\rangle$ is not (necessarily) zero. This particular set $\uu$ consists of exactly those indices $i \in [d]$ for which $k_i=-1$, i.e. $$\uu=\{i \in [d]\ : \ k_i=-1\}.$$  For this set $\uu$ we have $$ \left\langle D((\bsx_{\uu},\bsy_{\uu^c})),h_{\bsj,\bsm} \right\rangle=\mu_{\bsj',\bsm'}(D) $$ with $\bsj'$ and $\bsm'$ given as above and hence $$\mu_{\bsj,\bsm}(g) = (-1)^{|\uu|} \mu_{\bsj',\bsm'}(D).$$ From this the result follows and the proof is complete.
\end{proof}
  
\paragraph{Haar representation of the extreme $L_p$ discrepancy.}  Lemma~\ref{mainlemma} yields the following results on the extreme $L_p$ discrepancy of point sets in $[0,1)^d$ which is crucial for most other results on extreme discrepancy in this section.
  
\begin{prop} \label{prop2}
Let $\cP$ be an $N$-element point set in $[0,1)^d$ and $D$ its anchored discrepancy function with Haar coefficients $\mu_{\bsj,\bsm}=\mu_{\bsj,\bsm}(D)$ for $\bsj\in\NN_{-1}^d$ and $\bsm\in\DD_{\bsj}$. Then we have
\begin{equation}\label{l2exthaar}
(L_{2,N}^{\mathrm{extr}}(\cP))^2=\sum_{\bsj \in \NN_0^d} 2^{|\bsj|}\sum_{\bsm\in\DD_{\bsj}}|\mu_{\bsj,\bsm}|^2.
\end{equation}
For arbitrary $p \in (1,\infty)$ we have  
\begin{equation}\label{lpexthaar}
L_{p,N}^{\mathrm{extr}}(\cP)\asymp_{p,d} \left\|\left(\sum_{\bsj \in \NN_0^d} \sum_{\bsm \in \mathbb{D}_{\bsj}} 2^{2|\bsj|}|\mu_{\bsj,\bsm}|^2 \, {\mathbf 1}_{I_{\bsj,\bsm}}\right)^{1/2}\right\|_{L_p([0,1]^{d})},
\end{equation}
where the implied factors depend only on $p$ and $d$, but not on $N$.
\end{prop}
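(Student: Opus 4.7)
The plan is to reduce everything to computing or estimating $\|g\|_{L_p([0,1]^{2d})}$ via \eqref{relate}, and then expand $g$ in the Haar series on $[0,1]^{2d}$, using Lemma~\ref{mainlemma} to restrict attention to the non-vanishing coefficients. Recall that $L_{p,N}^{\mathrm{extr}}(\cP) = 2^{-d/p}\,\|g\|_{L_p([0,1]^{2d})}$, and that by Lemma~\ref{mainlemma} only those $\bsj \in \NN_{-1}^{2d}$ for which exactly one entry of each pair $\{j_i,k_i\}$ equals $-1$ (for $i \in [d]$) give non-zero Haar coefficients, the respective value being $|\mu_{\bsj,\bsm}(g)| = |\mu_{\bsj',\bsm'}(D)|$ for the induced $(\bsj',\bsm') \in \NN_0^d \times \DD_{\bsj'}$.

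For the $L_2$ identity \eqref{l2exthaar} I apply Parseval on $[0,1]^{2d}$ in the $L_\infty$-normalised Haar basis, obtaining $\|g\|_{L_2}^2 = \sum_{\bsj,\bsm} 2^{|\bsj|_+}\,|\mu_{\bsj,\bsm}(g)|^2$, where $|\bsj|_+ := \sum_{i} \max(j_i,0)$. Each $(\bsj',\bsm') \in \NN_0^d \times \DD_{\bsj'}$ is produced by exactly $2^d$ admissible $\bsj$'s (one per subset $A \subseteq [d]$ specifying which member of each pair carries the $-1$), all contributing the common value $|\mu_{\bsj',\bsm'}(D)|^2$ with Parseval weight $2^{|\bsj|_+} = 2^{|\bsj'|}$. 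Hence $\|g\|_{L_2}^2 = 2^d \sum_{\bsj'\in \NN_0^d} 2^{|\bsj'|} \sum_{\bsm'}|\mu_{\bsj',\bsm'}(D)|^2$, and the factor $2^d$ cancels against the $2^{-d}$ in \eqref{relate}, yielding \eqref{l2exthaar}.

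For \eqref{lpexthaar} with $p \in (1,\infty)$ I use the dyadic Littlewood--Paley square function inequality on $[0,1]^{2d}$:
\[
\|g\|_{L_p([0,1]^{2d})} \;\asymp_{p,d}\; \bigl\|S(g)\bigr\|_{L_p([0,1]^{2d})}, \qquad S(g)^2 \;=\; \sum_{\bsj,\bsm} 2^{2|\bsj|_+}\,|\mu_{\bsj,\bsm}(g)|^2\,\mathbf{1}_{I_{\bsj,\bsm}}.
\]
Setting $G(\bsz) := \sum_{\bsj' \in \NN_0^d}\sum_{\bsm' \in \DD_{\bsj'}} 2^{2|\bsj'|}\,|\mu_{\bsj',\bsm'}(D)|^2\,\mathbf{1}_{I_{\bsj',\bsm'}}(\bsz)$, so the right-hand side of \eqref{lpexthaar} equals $\|G^{1/2}\|_{L_p([0,1]^d)}$, Lemma~\ref{mainlemma} together with the product structure of the indicators (each admissible $I_{\bsj,\bsm}$ has the full interval $[0,1)$ in the ``unused'' coordinates) yields the key identity
\[
S(g)^2(\bsx,\bsy) \;=\; \sum_{A \subseteq [d]} G(\bsz^A),
\]
where $\bsz^A \in [0,1]^d$ has $i$-th coordinate $x_i$ for $i \in A$ and $y_i$ otherwise.

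The reduction to the $d$-dimensional square function is then completed by comparing $\|S(g)\|_{L_p([0,1]^{2d})}$ with $\|G^{1/2}\|_{L_p([0,1]^d)}$. The lower bound is immediate from the $A=[d]$ summand (then $\bsz^A = \bsx$) after integrating out $\bsy$. For the upper bound, one uses $\bigl(\sum_A G(\bsz^A)\bigr)^{p/2} \leq 2^{dp/2} \sum_A G(\bsz^A)^{p/2}$ and observes that for each fixed $A$ the $d$ coordinates not appearing in $\bsz^A$ are free, so $\int_{[0,1]^{2d}} G(\bsz^A)^{p/2}\rd \bsx\rd \bsy = \|G^{1/2}\|_{L_p([0,1]^d)}^p$. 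Combining with \eqref{relate} delivers \eqref{lpexthaar}. The main technical point is the identity $S(g)^2 = \sum_A G(\bsz^A)$; once it is in place, Littlewood--Paley in the ambient $2d$-dimensional space is reduced to the standard $d$-dimensional square function of the anchored discrepancy and only routine $L_p$-manipulations remain.
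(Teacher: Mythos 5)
Your proposal is correct and follows essentially the same route as the paper: the identity \eqref{relate}, Parseval (resp.\ the Littlewood--Paley square function) on $[0,1]^{2d}$, and Lemma~\ref{mainlemma} to identify the surviving coefficients, with each $(\bsj',\bsm')\in\NN_0^d\times\DD_{\bsj'}$ arising from exactly $2^d$ admissible $(\bsj,\bsm)$. Your explicit identity $S(g)^2(\bsx,\bsy)=\sum_{A\subseteq[d]}G(\bsz^A)$ together with the two-sided comparison (lower bound from the summand $A=[d]$, upper bound via $\bigl(\sum_A G(\bsz^A)\bigr)^{p/2}\le 2^{dp/2}\sum_A G(\bsz^A)^{p/2}$ and Fubini) is in fact a more careful rendering of the step that the paper writes as an exact equality between the $2d$- and $d$-dimensional square-function norms --- an equality that literally holds only for $p=2$ --- so your version supplies exactly the justification needed for the asserted $\asymp_{p,d}$.
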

  
\begin{proof}
First we prove the formula \eqref{l2exthaar} for the extreme $L_2$ discrepancy.  By~\eqref{relate}, Parseval's identity and Lemma~\ref{mainlemma} we have
     \begin{align*}
 (L_{2,N}^{\mathrm{extr}}(\cP))^2=&\frac{1}{2^d}\|g\|_{L_2([0,1]^{2d})}^2\\
         =&\frac{1}{2^d} \sum_{\bsj \in \NN_{-1}^{2d}} 2^{|\bsj|}\sum_{\bsm\in\DD_{\bsj}}|\mu_{\bsj,\bsm}(g)|^2 \\
         =&\frac{1}{2^d} \sum_{\uu \subseteq [d]} \sum_{\bsj'=(j_1',\dots,j_d')\in\NN_0^d} \sum_{\substack{\bsj=(j_1,\dots,j_d,k_1,\dots,k_d)\in\NN_{-1}^{2d} \\ \text{for\, } i \in [d]: \\ (j_i,k_i)=(j_i',-1) \text{\,if \,} i \not\in \uu \\  (j_i,k_i)=(-1,j_i') \text{\,if \,} i \in \uu}}2^{|\bsj'|}\sum_{\bsm'\in\DD_{\bsj'}}|\mu_{\bsj',\bsm'}(D)|^2 \\
            =&\frac{1}{2^d} \sum_{\uu \subseteq [d]} \sum_{\bsj'\in\NN_0^d} 2^{|\bsj'|}\sum_{\bsm'\in\DD_{\bsj'}}|\mu_{\bsj',\bsm'}|^2 \\
            =&\sum_{\bsj'\in\NN_0^d} 2^{|\bsj'|}\sum_{\bsm'\in\DD_{\bsj'}}|\mu_{\bsj',\bsm'}|^2.
     \end{align*}
     This completes the proof of \eqref{l2exthaar}.
 
In order to prove formula \eqref{lpexthaar} for the extreme $L_p$ discrepancy we make use of the Littlewood-Paley inequality which provides a tool which can be used to replace Parseval's equality and Bessel's inequality for functions in $L_p([0,1]^d)$ with $p \in (1,\infty)$. It involves the square function $S(f)$ of a function $f\in L_p([0,1]^d)$ which is given as
$$ S(f) = \left( \sum_{\bsj \in \NN_{-1}^d} \sum_{\bsm \in \mathbb{D}_{\bsj}} 2^{2|\bsj|} \, \langle f , h_{\bsj,\bsm} \rangle^2 \, {\mathbf 1}_{I_{\bsj,\bsm}} \right)^{1/2},$$ 
where ${\mathbf 1}_I$ is the indicator function of $I$.
Let $p \in (1,\infty)$ and let $f\in L_p([0,1]^d)$. Then the Littlewood-Paley inequality states $$ \| S(f) \|_{L_p} \asymp_{p,d} \| f \|_{L_p}.$$    
By~\eqref{relate}, the Littlewood-Paley inequality and Lemma~\ref{mainlemma} we have similarly as in the proof of \eqref{l2exthaar} that 
\begin{eqnarray*}
L_{p,N}^{\mathrm{extr}}(\cP) & = & \frac{1}{2^{d/p}}\|g\|_{L_p([0,1]^{2d})} \asymp_{p,d} \|S(g)\|_{L_p([0,1]^{2d})} \\
&=&\left\|\left(\sum_{\bsj \in \NN_{-1}^{2d}} \sum_{\bsm \in \mathbb{D}_{\bsj}} 2^{2|\bsj|}|\mu_{\bsj,\bsm}(g)|^2 \, {\mathbf 1}_{I_{\bsj,\bsm}}\right)^{1/2}\right\|_{L_p([0,1]^{2d})}\\
&=&\left\|\left(2^d\sum_{\bsj' \in \NN_0^d} \sum_{\bsm' \in \mathbb{D}_{\bsj'}} 2^{2|\bsj'|}|\mu_{\bsj',\bsm'}(D)|^2 \, {\mathbf 1}_{I_{\bsj',\bsm'}}\right)^{1/2}\right\|_{L_p([0,1]^{d})} \\
 &\asymp_d&\left\|\left(\sum_{\bsj' \in \NN_0^d} \sum_{\bsm' \in \mathbb{D}_{\bsj'}} 2^{2|\bsj'|}|\mu_{\bsj',\bsm'}|^2 \, {\mathbf 1}_{I_{\bsj',\bsm'}}\right)^{1/2}\right\|_{L_p([0,1]^{d})}.
\end{eqnarray*}     
This completes the proof of Proposition~\ref{prop2}.
\end{proof}

We deduce two corollaries from Proposition~\ref{prop2}. First, one can see in the same way as for the star $L_p$ discrepancy, that also the extreme $L_p$ discrepancy satisfies a general lower bound of Roth-type.

\begin{cor}\label{co0}
Let $p >1$. For every $N$-element point set $\cP$ in $[0,1)^d$ we have $$L_{p,N}^{{\rm extr}}(\cP) \gtrsim_{p,d} (1+\log N)^{\frac{d-1}{2}}.$$
\end{cor}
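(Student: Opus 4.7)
The plan is to run the classical Roth empty-box argument through the Littlewood--Paley representation of $L_{p,N}^{\mathrm{extr}}$ supplied by Proposition~\ref{prop2}. First I would choose $n\in\NN$ with $2N\le 2^n<4N$, so that $n\asymp 1+\log N$ and $2^n\asymp N$, and restrict attention to multi-indices $\bsj\in\NN_0^d$ with $|\bsj|=n$. For each such $\bsj$ the $2^n$ dyadic boxes $\{I_{\bsj,\bsm}\}_{\bsm\in\DD_{\bsj}}$ tile $[0,1)^d$ and at most $N\le 2^{n-1}$ of them can meet $\cP$; let $E_{\bsj}$ be the union of the remaining (empty) boxes, so $\lambda(E_{\bsj})\ge 1/2$, and set $T_n:=\sum_{|\bsj|=n}\mathbf{1}_{E_{\bsj}}$. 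Writing $R_n:=\#\{\bsj\in\NN_0^d:|\bsj|=n\}=\binom{n+d-1}{d-1}\asymp_d n^{d-1}$, the function $T_n$ then satisfies $\int T_n\ge R_n/2\asymp_d n^{d-1}$ together with the pointwise bound $0\le T_n\le R_n$.

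Next I would show that for any empty box $I_{\bsj,\bsm}$ the counting-function part of $\mu_{\bsj,\bsm}(D)$ vanishes (for every $\bsx_n\notin I_{\bsj,\bsm}$ some one-dimensional integral $\int_0^1\mathbf{1}_{a<x}h_{j,m}(x)\rd x$ is $0$ whenever $a\notin I_{j,m}$), while the volume part is handled via $\int_0^1 x\,h_{j,m}(x)\rd x=-4^{-j-1}$, so that
\begin{equation*}
|\mu_{\bsj,\bsm}(D)|\;=\;N\cdot 4^{-d}\cdot 2^{-2|\bsj|}.
\end{equation*}
For $|\bsj|=n$ and $2^n\asymp N$ this yields $2^{2|\bsj|}|\mu_{\bsj,\bsm}|^2\asymp_d 1$, so restricting the sum inside \eqref{lpexthaar} to these indices and to empty boxes gives
\begin{equation*}
L_{p,N}^{\mathrm{extr}}(\cP)\;\gtrsim_{p,d}\;\bigl\|T_n^{1/2}\bigr\|_{L_p([0,1]^d)}.
\end{equation*}

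The remaining step is to bound $\|T_n^{1/2}\|_{L_p}$ from below by $n^{(d-1)/2}$. For $p\ge 2$ this is immediate from monotonicity of $L_p$-norms and the identity $\|T_n^{1/2}\|_{L_2}^2=\int T_n\gtrsim_d n^{d-1}$. For $1<p<2$ the monotonicity runs the wrong way, and this is the main obstacle of the argument. I would remedy it by exploiting the pointwise upper bound $T_n\le R_n$ together with $p/2-1<0$:
\begin{equation*}
T_n^{p/2}\;=\;T_n\cdot T_n^{p/2-1}\;\ge\;R_n^{p/2-1}\cdot T_n,
\end{equation*}
whence $\int T_n^{p/2}\ge R_n^{p/2-1}\int T_n\gtrsim_d R_n^{p/2}$ and so $\|T_n^{1/2}\|_{L_p}\gtrsim_d R_n^{1/2}\asymp n^{(d-1)/2}$. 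This reverse-H\"older step is the extreme-discrepancy analogue of the Roth--Hal\'asz argument for the star $L_p$ discrepancy; everything else is routine once the representation in Proposition~\ref{prop2} is in hand.
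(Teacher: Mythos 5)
Your argument is correct and is essentially the paper's own proof: the paper simply cites that, starting from \eqref{lpexthaar}, one follows the Roth-type square-function argument of \cite[Proof of Theorem~3.6]{DHP}, which is exactly what you have written out (restriction to $|\bsj|=n$ with $2^n\asymp N$, vanishing of the counting part of $\mu_{\bsj,\bsm}$ on empty boxes, $|\mu_{\bsj,\bsm}|=N4^{-d}2^{-2|\bsj|}$ there, and the pointwise bound $T_n\le R_n$ to handle $1<p<2$). No gaps.
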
 
 
\begin{proof}
Starting from Eq. \eqref{lpexthaar} in Proposition~\ref{prop2} the proof follows exactly the lines of \cite[Proof of Theorem~3.6]{DHP}.
\end{proof}

Since  $$  (L_{2,N}^{{\rm star}}(\cP))^2=\sum_{\bsj \in \NN_{-1}^d} 2^{|\bsj|}\sum_{\bsm\in\DD_{\bsj}}|\mu_{\bsj,\bsm}|^2,$$ we immediately obtain $L_{2,N}^{\mathrm{extr}}(\cP)\leq L_{2,N}^{{\rm star}}(\cP)$ for every point set $\cP$ in $[0,1]^d$ as recently discovered in~\cite{HKP20} by relating these discrepancy notions to the worst-case integration errors in certain Hilbert spaces. Now we can even extend this result to some extent to general $p \in (1,\infty)$.

\begin{cor}\label{co1}
For every $p \in (1,\infty)$ and for every $N$-element point set $\cP$ in $[0,1)^d$ we have $$L_{p,N}^{\mathrm{extr}}(\cP) \lesssim_{p,d} L_{p,N}^{\mathrm{star}}(\cP).$$
\end{cor}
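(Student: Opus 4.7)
The plan is to run the same Haar/Littlewood-Paley machinery that gave Proposition~\ref{prop2} once more, but this time directly on the anchored discrepancy function $D$, and then exploit the fact that the Haar representation of the extreme $L_p$ discrepancy is obtained from that of the star $L_p$ discrepancy simply by discarding all indices that contain a ``$-1$'' component.

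First I would invoke the Littlewood-Paley inequality with $f = D$, where $\mu_{\bsj,\bsm} := \mu_{\bsj,\bsm}(D)$. Since $L_{p,N}^{\mathrm{star}}(\cP) = \|D\|_{L_p([0,1]^d)}$ by definition, this produces the equivalence
$$ L_{p,N}^{\mathrm{star}}(\cP) \asymp_{p,d} \left\|\left(\sum_{\bsj \in \NN_{-1}^d} \sum_{\bsm \in \mathbb{D}_{\bsj}} 2^{2|\bsj|}|\mu_{\bsj,\bsm}|^2 \, {\mathbf 1}_{I_{\bsj,\bsm}}\right)^{1/2}\right\|_{L_p([0,1]^d)}. $$
Proposition~\ref{prop2}, on the other hand, gives
$$ L_{p,N}^{\mathrm{extr}}(\cP) \asymp_{p,d} \left\|\left(\sum_{\bsj \in \NN_0^d} \sum_{\bsm \in \mathbb{D}_{\bsj}} 2^{2|\bsj|}|\mu_{\bsj,\bsm}|^2 \, {\mathbf 1}_{I_{\bsj,\bsm}}\right)^{1/2}\right\|_{L_p([0,1]^d)}, $$
with the \emph{same} Haar coefficients of $D$.

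The crucial observation is that $\NN_0^d \subseteq \NN_{-1}^d$ and every summand is nonnegative, so the square function associated with the extreme discrepancy is bounded pointwise on $[0,1]^d$ by the square function associated with the star discrepancy. Monotonicity of the $L_p$-norm turns this pointwise inequality into an inequality of norms, and chaining the two $\asymp_{p,d}$ equivalences around it yields the claimed $L_{p,N}^{\mathrm{extr}}(\cP) \lesssim_{p,d} L_{p,N}^{\mathrm{star}}(\cP)$.

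There is no real obstacle here: once Proposition~\ref{prop2} is in hand, everything reduces to one extra application of Littlewood-Paley and a trivial index-set comparison. The only point that deserves a line of care is that the implied constants in the two $\asymp_{p,d}$ relations are two-sided and independent of $\cP$ and $N$, so combining them through the pointwise bound produces a genuine one-sided inequality with a constant depending only on $p$ and $d$.
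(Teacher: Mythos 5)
Your proposal is correct and is essentially identical to the paper's own proof: Proposition~\ref{prop2} for the extreme discrepancy, the Littlewood--Paley equivalence for $D$ itself (which the paper cites from the literature on the star $L_p$ discrepancy), the pointwise comparison of the two square functions via $\NN_0^d \subseteq \NN_{-1}^d$, and monotonicity of the $L_p$-norm. No differences worth noting.
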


\begin{proof}
Obviously,
\begin{eqnarray*}
0 & \le & \left(\sum_{\bsj \in \NN_0^d} \sum_{\bsm \in \mathbb{D}_{\bsj}} 2^{2|\bsj|}|\mu_{\bsj,\bsm}|^2 \, {\mathbf 1}_{I_{\bsj,\bsm}}\right)^{1/2}\\
& \le & \left(\sum_{\bsj \in \NN_{-1}^d} \sum_{\bsm \in \mathbb{D}_{\bsj}} 2^{2|\bsj|}|\mu_{\bsj,\bsm}|^2 \, {\mathbf 1}_{I_{\bsj,\bsm}}\right)^{1/2}.
\end{eqnarray*}
Since for $L_p$-functions $0 \le f \le g$ we always have $\|f\|_{L_p} \le \|g\|_{L_p}$ we obtain
\begin{eqnarray*}
L_{p,N}^{\mathrm{extr}}(\cP) & \lesssim_{p,d} & \left\|\left(\sum_{\bsj \in \NN_0^d} \sum_{\bsm \in \mathbb{D}_{\bsj}} 2^{2|\bsj|}|\mu_{\bsj,\bsm}|^2 \, {\mathbf 1}_{I_{\bsj,\bsm}}\right)^{1/2}\right\|_{L_p([0,1]^{d})}\\
& \le & \left\|\left(\sum_{\bsj \in \NN_{-1}^d} \sum_{\bsm \in \mathbb{D}_{\bsj}} 2^{2|\bsj|}|\mu_{\bsj,\bsm}|^2 \, {\mathbf 1}_{I_{\bsj,\bsm}}\right)^{1/2}\right\|_{L_p([0,1]^{d})}\\
& \lesssim_{p,d} & L_{p,N}^{\mathrm{star}}(\cP).
\end{eqnarray*}
Here the first estimate follows from Proposition~\ref{prop2} and the final estimate follows from the corresponding and well-known ``Littlewood-Paley'' result for the standard $L_p$ discrepancy (see, e.g., \cite{HKP14}). This completes the proof.
\end{proof}

As a consequence, all point sets and sequences with the optimal order of star $L_p$ discrepancy also achieve the optimal order of extreme $L_p$ discrepancy as well. This means that we can extend Eq. \eqref{optordL2extr} to general $p \in (1,\infty)$. Combining \eqref{optordstdlp}, Corollary~\ref{co1} and the lower bound from Corollary~\ref{co0} we obtain:

\begin{thm}\label{thm:optordLpextr}
For every $p \in (1,\infty)$ and every $d,N \in \NN$ we have $$\inf_{\cP \subseteq [0,1)^d \atop |\cP|=N} L_{p,N}^{{\rm extr}}(\cP) \asymp_{d,p} (1+\log N)^{\frac{d-1}{2}}.$$ 
\end{thm}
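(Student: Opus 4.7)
The plan is to obtain the matching asymptotic equivalence $\asymp_{d,p}$ by combining the lower bound established in Corollary~\ref{co0} with an upper bound derived from Corollary~\ref{co1} applied to already-known optimal constructions for the star $L_p$ discrepancy. The conceptual work has been front-loaded into Proposition~\ref{prop2} and its two corollaries, so what remains is pure bookkeeping together with an appeal to \eqref{optordstdlp}.

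For the lower bound, Corollary~\ref{co0} directly gives $L_{p,N}^{\mathrm{extr}}(\cP) \gtrsim_{p,d} (1+\log N)^{(d-1)/2}$ uniformly over all $N$-element point sets $\cP \subseteq [0,1)^d$. Since the bound holds for \emph{every} admissible $\cP$, taking the infimum over $\cP$ preserves it and yields the $\gtrsim_{d,p}$ direction of the claim.

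For the upper bound, I would invoke \eqref{optordstdlp}, which guarantees the existence of $N$-element point sets $\cP^{\ast} \subseteq [0,1)^d$ (e.g.\ the Chen--Skriganov digital nets or the higher-order digital nets cited in the introduction) satisfying $L_{p,N}^{\mathrm{star}}(\cP^{\ast}) \le C_{d,p}(1+\log N)^{(d-1)/2}$. Feeding such a $\cP^{\ast}$ into Corollary~\ref{co1} gives
\[
L_{p,N}^{\mathrm{extr}}(\cP^{\ast}) \lesssim_{p,d} L_{p,N}^{\mathrm{star}}(\cP^{\ast}) \lesssim_{p,d} (1+\log N)^{(d-1)/2},
\]
which bounds the infimum over $\cP$ from above by the asserted order.

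The substantive obstacles have already been cleared upstream, most notably the Haar-coefficient identification of Lemma~\ref{mainlemma}, which shows that the nonzero Haar coefficients of $g$ correspond bijectively (up to signs) to the Haar coefficients of $D$ indexed over $\NN_0^d$ rather than the full $\NN_{-1}^d$; this is precisely the index-set asymmetry that powers the pointwise square-function bound used in Corollary~\ref{co1}. The only intrinsic limitation of this proof strategy is the restriction $p>1$, inherited from the Littlewood-Paley inequality that enters both Corollary~\ref{co0} and Corollary~\ref{co1}. Extending the theorem to $p=1$ requires a different route, which the paper subsequently develops only in dimension $d=2$.
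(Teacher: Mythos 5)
Your proposal is correct and follows exactly the route the paper takes: the lower bound is Corollary~\ref{co0}, and the upper bound comes from feeding the optimal star-$L_p$ constructions of \eqref{optordstdlp} into the comparison $L_{p,N}^{\mathrm{extr}} \lesssim_{p,d} L_{p,N}^{\mathrm{star}}$ of Corollary~\ref{co1}. Nothing is missing.
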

 
\paragraph{The  extreme $L_1$ discrepancy for dimension two.} In dimension $d=2$ we can prove the lower bound even for $p=1$. For the proof we also use a method introduced by Hal\'{a}sz~\cite{hala} for the star $L_1$ discrepancy.

According to \eqref{relate} the extreme $L_1$ discrepancy of $\cP$ in $[0,1)^2$ can be calculated via
 $$ L_{1,N}^{\mathrm{extr}}(\cP)=\frac14 \|g\|_{L_1([0,1]^4)}, $$
 where
 $$ g(x_1,x_2,y_1,y_2)=D(x_1,x_2)-D(x_1,y_2)-D(y_1,x_2)+D(y_1,y_2). $$
From the proof of Lemma~\ref{mainlemma} we find the following crucial observation: Let $\bsj=(j_1,j_2,-1,-1)$ with $j_1,j_2\in\NN_0$ and $\bsm=(m_1,m_2,0,0)\in\DD_{\bsj}$. Set $\bsj'=(j_1,j_2)$ and $\bsm'=(m_1,m_2)\in\DD_{\bsj'}$.
Then we have
\begin{eqnarray} \label{cruc}
\langle g,h_{\bsj,\bsm}\rangle  & = &\int_{[0,1]^4}(D(x_1,x_2)-D(x_1,y_2)-D(y_1,x_2)+D(y_1,y_2)) \nonumber\\
& & \hspace{1cm}\times h_{j_1,m_1}(x_1)h_{j_2,m_2}(x_2)h_{-1,0}(y_1)h_{-1,0}(y_2)\rd (x_1, x_2, y_1, y_2) \nonumber\\
&=& \int_{[0,1]^2}D(x_1,x_2)h_{j_1,m_1}(x_1)h_{j_2,m_2}(x_2)\rd (x_1, x_2)\nonumber \\
&=& \langle D,h_{\bsj',\bsm'}\rangle.
 \end{eqnarray}
 Four-dimensional Haar functions of this form clearly behave like two-dimensional Haar functions, which is important for the product rule to hold (which says that the product of intersecting two-dimensional Haar functions is a Haar function again.)
Based on this observation we can prove the following lower bound on the extreme $L_1$ discrepancy of two-dimensional point sets.

\begin{thm} \label{thm:l1}
There exists a constant $c>0$ such that for every $N$-element point set $\cP$ in the unit square we have $$ L_{1,N}^{\mathrm{extr}}(\cP)\geq c\sqrt{\log{N}}. $$
 \end{thm}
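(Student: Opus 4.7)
\emph{Plan.} The idea is to adapt Halász's Riesz-product proof of $L_{1,N}^{\mathrm{star}}(\cP) \gtrsim \sqrt{\log N}$ in dimension two to the extreme setting, the bridge being that~\eqref{cruc} identifies certain four-dimensional Haar coefficients of $g$ with two-dimensional Haar coefficients of $D$.

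\emph{Step 1: duality and reduction to 2D.} By~\eqref{relate} for $d=2$ we have $L_{1,N}^{\mathrm{extr}}(\cP) = \tfrac14\|g\|_{L_1([0,1]^4)}$, so by $L_1$--$L_\infty$ duality it suffices to produce $\Phi \in L_\infty([0,1]^4)$ with $\|\Phi\|_\infty \lesssim 1$ and $|\langle g, \Phi\rangle| \gtrsim \sqrt{\log N}$. I would take $\Phi(\bsx, \bsy) := F(\bsx)$ independent of the $\bsy$-coordinates, so that $\|\Phi\|_{L_\infty([0,1]^4)} = \|F\|_{L_\infty([0,1]^2)}$. If $F$ lies in the span of the 2D Haar functions $h_{\bsj', \bsm'}$ with $\bsj' \in \NN_0^2$ (both components nonnegative), then the 4D Haar expansion of $\Phi$, viewed as a function independent of $(y_1, y_2)$, consists only of Haar functions of the form $h_{(j_1', j_2', -1, -1), (m_1', m_2', 0, 0)}$. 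Applying~\eqref{cruc} termwise then yields
$$\langle g, \Phi\rangle \;=\; \langle D, F\rangle,$$
and the task reduces to constructing such an $F$ with $\|F\|_\infty \lesssim 1$ and $|\langle D, F\rangle| \gtrsim \sqrt{\log N}$.

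\emph{Step 2: Halász's Riesz product.} This is precisely the setup of Halász's classical 2D star $L_1$ lower bound. Setting $n := \lfloor\log_2 N\rfloor$ and $\bsj_j := (j, n-j) \in \NN_0^2$ for $j \in \{0, 1, \ldots, n\}$, I would take
$$R_j \;:=\; \sum_{\bsm \in \DD_{\bsj_j}} \operatorname{sgn}(\mu_{\bsj_j, \bsm}(D))\, h_{\bsj_j, \bsm} \qquad\text{and}\qquad F \;:=\; \operatorname{Re}\prod_{j=0}^{n}(1 + \imag\gamma R_j) - 1$$
with $\gamma \asymp n^{-1/2}$. Since the dyadic boxes $I_{\bsj_j, \bsm}$ partition $[0,1]^2$, one has $|R_j| \le 1$ and in fact $R_j^2 \equiv 1$ pointwise. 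The $L_\infty$-bound
$$\Bigl|\prod_{j=0}^{n}(1 + \imag\gamma R_j)\Bigr|^2 \;=\; \prod_{j=0}^{n}(1 + \gamma^2 R_j^2) \;\le\; (1 + \gamma^2)^{n+1}$$
gives $\|F\|_\infty = O(1)$. Moreover each nontrivial term in the expansion of $F$ is a product $\prod_{j \in S} h_{\bsj_j, \bsm_j}$ of Haar functions at distinct diagonal levels, and such a product is (up to sign) a 2D Haar function at some level in $\NN_0^2$; hence $F$ lies in the restricted Haar span demanded by Step 1. The matching lower bound $|\langle D, F\rangle| \gtrsim \sqrt{n}$ is Halász's core estimate, deduced from Roth's dyadic-box bound $\sum_{\bsm}|\mu_{\bsj_j, \bsm}(D)| \gtrsim 1$ on each diagonal level, combined with an orthogonality argument that controls the higher-order terms of the Riesz expansion.

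\emph{Main obstacle.} Step 1 is bookkeeping once \eqref{cruc} is in hand. The substantive work is the quantitative analysis in Step 2: verifying the $L_\infty$-bound on $F$, and more delicately the logarithmic-root lower bound on $\langle D, F\rangle$ through the intricate combinatorics of the complex Riesz product and the orthogonality of Haar functions at distinct hyperbolic-cross levels. This is a classical computation, and once it is carried out the theorem follows by $L_{1,N}^{\mathrm{extr}}(\cP) \ge \tfrac14 |\langle g, \Phi\rangle|/\|\Phi\|_\infty = \tfrac14 |\langle D, F\rangle|/\|F\|_\infty \gtrsim \sqrt{\log N}$.
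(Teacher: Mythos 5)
Your proposal is correct and follows essentially the same route as the paper: reduce via \eqref{relate} and \eqref{cruc} to testing $g$ against a function depending only on the $\bsx$-variables, then run Hal\'{a}sz's complex Riesz-product argument for the two-dimensional anchored discrepancy. The only cosmetic differences are that the paper builds the $r$-functions with coefficients $\varepsilon_{\bsj,\bsm}\in\{-1,0\}$ supported on empty boxes (rather than $\operatorname{sgn}(\mu_{\bsj,\bsm})$ everywhere) and writes the test function directly as a four-dimensional Haar sum with indices $(k,n-k,-1,-1)$; the quantitative control of the higher-order Riesz terms that you defer to ``Hal\'{a}sz's core estimate'' is exactly the computation the paper carries out in detail.
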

 
\begin{proof}
Let $n\in\NN$ such that $2^{n-1}\leq 2N\leq 2^{n}$. For $k \in \{0,1,\dots,n\}$ we introduce the functions $$ f_k:=\sum_{\bsj=(k,n-k,-1,-1)}\sum_{\bsm\in\DD_{\bsj}}\varepsilon_{\bsj,\bsm}h_{\bsj,\bsm}. $$ For $\bsj=(k,n-k,-1,-1)$ and $\bsm=(m_1,m_2,0,0)\in\DD_{\bsj}$ we set $\bsj'=(k,n-k)$ and $\bsm'=(m_1,m_2)\in\DD_{\bsj'}$. The signs $\varepsilon_{\bsj,\bsm}$ are chosen such that $\varepsilon_{\bsj,\bsm}=-1$ if $\cP\cap I_{\bsj',\bsm'}=\emptyset$ and $\varepsilon_{\bsj,\bsm}=0$ otherwise. Then we have (using~\eqref{cruc})
\begin{align} \label{lowerf}
\langle g, f_k\rangle \nonumber =&-\sum_{\bsj=(k,n-k,-1,-1)}\sum_{\substack{\bsm\in\DD_{\bsj}\\ \cP\cap I_{\bsj',\bsm'}=\emptyset }}\langle g, h_{\bsj,\bsm}\rangle     =-\sum_{\bsj'=(k,n-k)}\sum_{\substack{\bsm'\in\DD_{\bsj'}\\ \cP\cap I_{\bsj',\bsm'}=\emptyset }}\langle D, h_{\bsj',\bsm'}\rangle \\
=&\sum_{\bsj'=(k,n-k)}\sum_{\substack{\bsm'\in\DD_{\bsj'}\\ \cP\cap I_{\bsj',\bsm'}=\emptyset }}N2^{-2|\bsj'|-4} \geq 2^{n-2}2^{n-1}2^{-2n-4}=2^{-7}. 
\end{align}
Here we used that at least $2^{n-1}$ of the $2^n$ boxes $I_{\bsj',\bsm'}$ must be empty and the well-known and easily checked fact that $\langle D, h_{\bsj',\bsm'}\rangle=-N2^{-2|\bsj'|-4}$ if $I_{\bsj',\bsm'}$ is empty. 
    
The rest of the proof works completely the same as Halasz' proof on the standard $L_1$ discrepancy of two-dimensional point sets from \cite{hala}. We introduce the Riesz product $$F:=\prod_{k=0}^n \left(1+\frac{\icomp \gamma}{\sqrt{n+1}} f_k\right)-1=\frac{\icomp \gamma}{\sqrt{n+1}}\sum_{k=0}^{n}f_k+F_{>n},$$  where $F_{>n}=F_2+\dots+F_n $ with $$F_k=\left(\frac{\icomp \gamma}{\sqrt{n+1}}\right)^k \sum_{0\leq l_1<\dots<l_k\leq n} f_{l_1}\cdots f_{l_k}$$ for $k \in \{0,1,\dots,n\}$. Clearly, with~\eqref{lowerf} we have $$ |\langle g,F\rangle| \geq  \frac{ \gamma}{\sqrt{n+1}}\left|\sum_{k=0}^{n}\langle g, f_k\rangle \right|-|\langle g,F_{>n}\rangle|\geq \frac{\gamma}{2^7}\sqrt{n+1}-|\langle g,F_{>n}\rangle|. $$

For $\bsr=(r_1,r_2)\in \NN_0^2$ with $r_1+r_2=r$ we call a function of the form $$f_{\bsr}:=\sum_{\bsj=(r_1,r_2,-1,-1)}\sum_{\bsm\in\DD_{\bsj}}\varepsilon_{\bsj,\bsm}h_{\bsj,\bsm}$$ with coefficients $\varepsilon_{\bsj,\bsm}\in\{-1,0,1\}$ an $r$-function. (Therefore the $f_k$ from above are $r$-functions with $\bsr=(k,n-k,-1,-1)$ and $r=n$.) For an $r$-function we have
\begin{align} \label{estimate}
|\langle g,f_{\bsr}\rangle| \leq &\sum_{\bsj=(r_1,r_2,-1,-1)}\sum_{\bsm\in\DD_{\bsj}}|\langle g, h_{\bsj,\bsm}\rangle|\nonumber\\
= & \sum_{\bsj'=(r_1,r_2)}\sum_{\bsm'\in\DD_{\bsj'}}|\langle D, h_{\bsj',\bsm'}\rangle|  \nonumber\\ \nonumber
\leq& \sum_{\bsj'=(r_1,r_2)}\sum_{\bsm'\in\DD_{\bsj'}}\left|\left\langle \sum_{\bsz\in\cP}\bsone_{[\bszero,\bsx)}(\bsz), h_{\bsj',\bsm'}\right\rangle\right|+\sum_{\bsj'=(r_1,r_2)}\sum_{\bsm'\in\DD_{\bsj'}}|\langle Nx_1x_2, h_{\bsj',\bsm'}\rangle| \\  \nonumber
\leq & \sum_{\bsj'=(r_1,r_2)}\sum_{\bsz\in\cP}\sum_{\substack{\bsm'\in\DD_{\bsj'}\\ \bsz\in \cP\cap I_{\bsj',\bsm'}}}\underbrace{|\langle \bsone_{[\bszero,\bsx)}(\bsz), h_{\bsj',\bsm'}\rangle|}_{\leq 2^{-|\bsj'|}}+\sum_{\bsj'=(r_1,r_2)}\sum_{\bsm'\in\DD_{\bsj'}} \frac{N}{2^{2|\bsj'|+4}} \nonumber \\
\leq & \frac{N}{2^{|\bsj'|}}+\frac{2^{|\bsj'|}N}{2^{2|\bsj'|+4}}\nonumber\\
\lesssim & \frac{N}{2^r}.
\end{align}

Now, by the fact that products of two-dimensional Haar functions of same size which intersect are Haar functions again and since four-dimensional Haar functions with $\bsj=(j_1,j_2,-1,-1)$ behave like two-dimensional Haar functions, we find that $f_{l_1}\cdots f_{l_k}$ for $0\leq l_1<\dots<l_k\leq n$ is an $r$-function again, with corresponding vector $\bss=(n-l_1,l_k,-1,-1)$, i.e. $s=n-l_1+l_k$. The parameter $s$ can have the values $n+1,n+2,\dots, 2n$. A term $f_{l_1}\cdots f_{l_k}$ is an $r$-function with parameter $s$ if we have $n-l_1+l_k=s$ and $l_k\leq n$; i.e., if there are $2n-s+1$ possible choices for $l_1$ to guarantee both conditions ($l_1 \in\{0,1,\dots, 2n-s\}$). For the remaining $(k-2)$ indices $l_2,\dots,l_{k-1}$ we have $s-n-1$ possible values they can take, which can be selected in $\binom{s-n-1}{k-2}$ many ways.

Choose $\gamma\leq\frac12$. Then we obtain (using~\eqref{estimate})
\begin{align*}
|\langle g,F_{>n}\rangle| \le & \sum_{k=2}^n |\langle g, F_k\rangle|\\
\le & \sum_{k=2}^n \left(\frac{\gamma}{\sqrt{n+1}}\right)^k \sum_{0 \le l_1< \cdots < l_k \le n} |\langle g,f_{l_1} \cdots f_{l_k}\rangle|\\ 
\le & \sum_{s=n+1}^{2 n} \sum_{k=2}^n \left(\frac{\gamma}{\sqrt{n+1}}\right)^k \sum_{0 \le l_1< \cdots < l_k \le n \atop s=n-l_1+l_k} \frac{N}{2^s}\\
\le & \sum_{s=n+1}^{2n}(2n-s+1)\sum_{k=2}^{s-n+1} \binom{s-n-1}{k-2} \left(\frac{\gamma}{\sqrt{n+1}}\right)^k \frac{N}{2^s} \\
\leq &\left(\frac{\gamma}{\sqrt{n+1}}\right)^2n\sum_{s=n+1}^{2n}\sum_{k=0}^{s-n-1}  \binom{s-n-1}{k} \left(\frac{\gamma}{\sqrt{n+1}}\right)^k \frac{N}{2^s} \\
=&\frac{\gamma^2n}{n+1}\sum_{s=n+1}^{2n} \left(1+\frac{\gamma}{\sqrt{n+1}}\right)^{s-n-1} \frac{N}{2^s} \\
=&\frac{\gamma^2n}{n+1}\sum_{s=n+1}^{2n} \left(\frac12+\frac{\gamma}{2\sqrt{n+1}}\right)^{s-n-1} \frac{N}{2^{n+1}} \\
\leq& \gamma^2 \sum_{s=n+1}^{\infty} \left(\frac12+\frac{\gamma}{2\sqrt{n+1}}\right)^{s-n-1}\\
\leq& \gamma^2 \sum_{s=n+1}^{\infty} \left(\frac34\right)^{s-n-1}\\
=& 4\gamma^2\\
\leq&1.
     \end{align*}
 Therefore $|\langle g, F\rangle| \gtrsim \sqrt{n+1}$.
     On the other hand, we also have
     $$ \|F\|_{L_{\infty}([0,1]^4)}\leq \left(1+\frac{\gamma^2}{n+1}\right)^{\frac{n+1}{2}}+1 \leq \mathrm{e}^{\frac{\gamma^2}{2}}+1\lesssim 1. $$
     Therefore,
     $$ L_{1,N}^{\mathrm{extr}}(\cP)=\frac14 \|g\|_{L_4([0,1]^4)}\gtrsim \frac{|\langle g, F\rangle|}{\|F\|_{L_{\infty}([0,1]^4)}} \gtrsim \sqrt{n+1} \gtrsim \sqrt{\log{N}}. $$ 
 \end{proof}

As a consequence we obtain the following corollary:

\begin{cor}\label{cor:optordLpextr}
For every $p \in [1,\infty)$ we have $$\inf_{\cP \subseteq [0,1)^2 \atop |\cP|=N} L_{p,N}^{{\rm extr}}(\cP) \asymp_{d,p} (1+\log N)^{1/2}.$$ 
\end{cor}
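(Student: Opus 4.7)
The plan is to derive the corollary by assembling three ingredients already available: the lower bound at $p=1$ provided by Theorem~\ref{thm:l1}, the matching bounds in the range $p\in(1,\infty)$ provided by Theorem~\ref{thm:optordLpextr} (specialized to $d=2$), and the monotonicity of the $L_p$-norm in $p$ over a (sub-)probability space noted just after~\eqref{rel:exper}. Because the integration region in the definition of $L_{p,N}^{\rm extr}$ carries total mass at most one, Jensen's (equivalently H\"older's) inequality immediately yields $L_{p,N}^{\rm extr}(\cP)\le L_{q,N}^{\rm extr}(\cP)$ for all $1\le p\le q<\infty$ and every $N$-point set $\cP$.

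For the lower bound, fix $p\in[1,\infty)$ and let $\cP\subseteq[0,1)^2$ be arbitrary with $|\cP|=N$. Monotonicity and Theorem~\ref{thm:l1} combine to give
$$L_{p,N}^{\rm extr}(\cP)\ \ge\ L_{1,N}^{\rm extr}(\cP)\ \gtrsim\ \sqrt{\log N},$$
so $\inf_{|\cP|=N} L_{p,N}^{\rm extr}(\cP)\gtrsim (1+\log N)^{1/2}$ for every $p\in[1,\infty)$, with an implicit constant even independent of $p$.

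For the matching upper bound I would split into two cases. For $p\in(1,\infty)$, Theorem~\ref{thm:optordLpextr} with $d=2$ directly produces $N$-point sets (for example, suitable digitally shifted digital nets of the Chen--Skriganov type) whose extreme $L_p$ discrepancy is $\lesssim_p(1+\log N)^{1/2}$. For $p=1$, I pick any such point set for the parameter $p=2$ and apply monotonicity in the reverse direction, $L_{1,N}^{\rm extr}(\cP)\le L_{2,N}^{\rm extr}(\cP)\lesssim(1+\log N)^{1/2}$. No essential difficulty arises; the corollary is a clean consequence of the two preceding theorems together with $L_p$-norm monotonicity, and the only book-keeping point is that the implicit constants are permitted to depend on $p$, which is precisely what the notation $\asymp_{d,p}$ in the statement allows.
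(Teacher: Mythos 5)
Your proposal is correct and is essentially the argument the paper intends (the corollary is stated there without an explicit proof, as a direct consequence of Theorem~\ref{thm:l1}, Theorem~\ref{thm:optordLpextr}, and the $L_p$-monotonicity noted in the introduction). Both the lower bound via $L_{p,N}^{\rm extr}\ge L_{1,N}^{\rm extr}\gtrsim\sqrt{\log N}$ and the $p=1$ upper bound via $L_{1,N}^{\rm extr}\le L_{2,N}^{\rm extr}$ are exactly the right book-keeping.
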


\paragraph{Extreme discrepancy of digital nets in dimension 2.}

Proposition~\ref{prop2} demonstrates that a calculation of the Haar coefficients of the anchored discrepancy function of a point set yields not only results on the standard $L_p$ discrepancy, but also on the extreme $L_p$ discrepancy. For the latter it even suffices to evaluate only those coefficients where $\bsj\in\NN_0^d$.
The Haar coefficients of the discrepancy function of certain digital nets have been computed exactly in~\cite{Kritz}. We use these results to obtain exact formulas for the extreme $L_2$ discrepancy of these nets. We introduce the relevant nets. 

\begin{itemize}
\item We study digital $(0,m,2)$-nets generated by the following $m\times m$ matrices over $\ZZ_2$:
\begin{equation} \label{matrixa} C_1=
\begin{pmatrix}
0 & 0 & 0 & \cdots & 0 & 0 & 1 \\
0 & 0 & 0 & \cdots & 0 & 1 & 0 \\
0 & 0 & 0 & \cdots & 1 & 0 & 0 \\
\vdots & \vdots & \vdots & \ddots & \vdots & \vdots & \vdots & \\
0 & 0 & 1 & \cdots & 0 & 0 & 0 \\
0 & 1 & 0 & \cdots & 0 & 0 & 0 \\
1 & 0 & 0 & \cdots & 0 & 0 & 0 \\
\end{pmatrix}
\text{\, and\, \,}
 C_2=
\begin{pmatrix}
1 & 0 & 0 & \cdots & 0 & 0 & a_{1} \\
0 & 1 & 0 & \cdots &  0 & 0 & a_{2} \\
0 & 0 & 1 & \cdots & 0 & 0 & a_{3} \\
\vdots & \vdots & \vdots & \ddots & \vdots & \vdots & \vdots & \\
0 & 0 & 0 & \cdots &  1 & 0 & a_{m-2} \\
0 & 0 & 0 & \cdots &  0 & 1 & a_{m-1} \\
0 & 0 & 0 & \cdots &  0 & 0 & 1 \\
\end{pmatrix}.
\end{equation}
We study the discrepancy of the digital net $\cP_{\bsa}(\vecs)$ with $\bsa=(a_1,\dots,a_{m-1})^{\top} \in \ZZ_2^{m-1}$, generated by $C_1$ and $C_2$ and digitally shifted by $\vecs=(\sigma_1,\dots,\sigma_m)^{\top}\in \ZZ_2^m$. The set $\cP_{\bsa}(\vecs)$ can be written as
$$ \cP_{\bsa}(\vecs)=\left\{\bigg(\frac{t_m}{2}+\dots+\frac{t_1}{2^m},\frac{b_1}{2}+\dots+\frac{b_m}{2^m}\bigg) \ : \ t_1,\dots, t_m \in\{0,1\}\right\}, $$ where $b_k=t_k\oplus a_{k}t_{m}\oplus\sigma_m$ for $k\in\{1,\dots,m-1\}$ and $b_m=t_m\oplus \sigma_m$. The operation $\oplus$ denotes addition modulo 2. 

\item We also consider symmetrized versions of shifted digital nets, which we define as follows:
$$ \widetilde{\cP}_{\bsa}(\vecs):=\cP_{\bsa}(\vecs)\cup \cP_{\bsa}(\vecs^*)=\cP_{\bsa}(\vecs)\cup \{(x,1-2^{-m}-y):(x,y)\in \cP_{\bsa}(\vecs)\},$$
where $\vecs^*=(\sigma_1\oplus 1,\dots,\sigma_m\oplus 1)^{\top}$. 

\item Finally we introduce the class of digital $(0,m,2)$-nets $\cP_{\bsc}$ which are generated by $C_1$ as above and matrices $C_2$ of the form
\begin{equation*}
 C_2=
\begin{pmatrix}
1 & c_1 & c_1 & \cdots & c_1 & c_1 & c_1 \\
0 & 1 & c_2 & \cdots & c_2 & c_2 & c_2 \\
0 & 0 & 1 & \cdots & c_3 & c_3 & c_3 \\
\vdots & \vdots & \vdots & \ddots & \vdots & \vdots & \vdots & \\
0 & 0 & 0 & \cdots &  1 & c_{m-2} & c_{m-2} \\
0 & 0 & 0 & \cdots &  0 & 1 & c_{m-1} \\
0 & 0 & 0 & \cdots &  0 & 0 & 1 \\
\end{pmatrix},
\end{equation*}
where $\bsc=(c_1,\dots, c_{m-1})^{\top} \in \ZZ_2^{m-1}$. If $\bsc=(1,\dots,1)^{\top}$, the corresponding digital net $\cP_{\bsone}$ is sometimes called upper-$\bsone$-net.
\end{itemize}

\begin{thm} \label{2dnets}
We have
\begin{align*}
(L_{2,2^m}^{\mathrm{extr}}(\cP_{\bsa}(\vecs)))^2=\frac{m}{64}+\frac{1}{72}-\frac{1}{9\cdot 4^{m+2}}+\frac{1}{192}\sum_{k=1}^{m-2}a_k(2-2^{2k-2m+2})
\end{align*}
and
\begin{align*}
(L_{2,2^{m+1}}^{\mathrm{extr}}(\widetilde{\cP}_{\bsa}(\vecs))^2=\frac{m+1}{24}-\frac{5}{9\cdot 2^{4^{m+1}}}-\frac{1}{2^{2m+3}}\sum_{k=1}^{m-1}a_k 2^{2k}.
\end{align*}
For the upper-$\bsone$-net we have
\begin{align*}
(L_{2,2^m}^{\mathrm{extr}}(\cP_{\bsone}))^2=\frac{m}{64}+\frac{1}{72}-\frac{1}{9\cdot 4^{m+2}}.
\end{align*}
 For general $p\in (1,\infty)$ we have $$L_{p,N}^{\mathrm{extr}}(\cP) \lesssim_p \sqrt{\log N},$$ where $\cP\in \{\cP_{\bsa}(\vecs),\cP_{\bsc}\}$ and $N=2^m$ or $\cP=\widetilde{\cP}_{\bsa}(\vecs)$ and $N=2^{m+1}$. Hence, all these nets achieve the optimal order of extreme $L_p$ discrepancy, which is $\sqrt{\log{N}}$, according to Theorem~\ref{thm:l1}.
\end{thm}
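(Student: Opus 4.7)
The plan is to apply Proposition~\ref{prop2} together with the explicit Haar coefficients of the anchored discrepancy function $D$ of the nets $\cP_{\bsa}(\vecs)$, $\widetilde{\cP}_{\bsa}(\vecs)$ and $\cP_{\bsc}$, which were computed in \cite{Kritz}. Formula \eqref{l2exthaar} expresses $(L_{2,N}^{\mathrm{extr}}(\cP))^2$ as $\sum_{\bsj \in \NN_0^2} 2^{|\bsj|}\sum_{\bsm \in \DD_{\bsj}}|\mu_{\bsj,\bsm}|^2$, which is precisely the sum that yields the star $L_2$ discrepancy squared, except that indices $\bsj$ with at least one entry equal to $-1$ are dropped. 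Thus the task reduces to re-summing the Haar-coefficient formulas of \cite{Kritz}, restricted to the ``proper'' two-dimensional indices, so the known star-discrepancy computation for the same nets furnishes the bulk of the bookkeeping for free.

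To carry out the $L_2$ computation I would split $\bsj = (j_1,j_2) \in \NN_0^2$ according to the structural breakpoint $m$ of the underlying $(0,m,2)$-net into three regimes: the large-box regime $j_1 + j_2 < m$, where the $(0,m,2)$-property forces every elementary interval $I_{\bsj,\bsm}$ to contain exactly $2^{m-j_1-j_2}$ points and hence $\mu_{\bsj,\bsm}$ has a clean closed form independent of $\bsa$; the intermediate regime $j_1 + j_2 \geq m$ with $j_1,j_2 < m$, which produces the linear-in-$m$ main term and the $\bsa$-dependent corrections; and the tail regime $\max(j_1,j_2) \geq m$, which yields the constant and exponentially small $1/4^{m}$ terms. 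Summing the resulting geometric and arithmetic series in each regime and collecting the outcome gives the stated identities for $\cP_{\bsa}(\vecs)$ and $\cP_{\bsone}$. For $\widetilde{\cP}_{\bsa}(\vecs)$ I would use the decomposition $\widetilde{\cP}_{\bsa}(\vecs) = \cP_{\bsa}(\vecs) \cup \cP_{\bsa}(\vecs^*)$: the Haar coefficients of $\cP_{\bsa}(\vecs^*)$ differ from those of $\cP_{\bsa}(\vecs)$ by a controllable sign flip depending on the last shift digit, and the coefficients of the discrepancy function of a disjoint union add, which after squaring produces cross terms whose $\bsa$-dependence collapses to the factor $\sum_{k=1}^{m-1} a_k 2^{2k}$.

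For the $L_p$ upper bound with $p \in (1,\infty)$ the most efficient route is to invoke Corollary~\ref{co1}, which gives $L_{p,N}^{\mathrm{extr}}(\cP) \lesssim_{p,d} L_{p,N}^{\mathrm{star}}(\cP)$, and then to cite the known fact that each of the nets $\cP_{\bsa}(\vecs)$, $\widetilde{\cP}_{\bsa}(\vecs)$, $\cP_{\bsc}$ satisfies $L_{p,N}^{\mathrm{star}}(\cP) \lesssim_p \sqrt{\log N}$. An independent derivation works directly from \eqref{lpexthaar}: one uses the bounds $|\mu_{\bsj,\bsm}| \lesssim 2^{-2|\bsj|}$ valid on a sparse set of indices and the fact that only the strip $j_1+j_2 \lesssim m$ carries essential mass, and then controls the $L_p$ norm of the square function exactly as in \cite[Proof of Theorem~3.6]{DHP}.

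The main obstacle is the bookkeeping in the $L_2$ identity: isolating precisely those cross terms in the intermediate regime that carry $a_k$ and verifying that the resulting sums telescope to the stated expressions $\frac{1}{192}\sum_{k=1}^{m-2} a_k(2 - 2^{2k-2m+2})$ and $\frac{1}{2^{2m+3}}\sum_{k=1}^{m-1} a_k 2^{2k}$. Since the matching star $L_2$ calculation in \cite{Kritz} is already of this flavour, the additional work consists essentially in identifying and subtracting the ``one-dimensional'' contributions (those with $\min(j_1,j_2) = -1$); these are themselves elementary geometric sums and appear as explicit, small corrections to the star-discrepancy formulas of \cite{Kritz}, yielding the claimed closed forms after simplification.
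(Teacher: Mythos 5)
Your treatment of the three $L_2$ identities follows the same route as the paper: formula \eqref{l2exthaar} of Proposition~\ref{prop2} reduces everything to summing the Haar coefficients $|\mu_{\bsj,\bsm}|^2$ of the anchored discrepancy function over $\bsj\in\NN_0^2$ only, and these coefficients are taken from \cite{Kritz} (the paper literally adds the sub-sums of \cite[Lemmas~8--14]{Kritz} after undoing a normalisation by $2^{2m}$, and supplies the previously unpublished coefficient sums for the upper-$\bsone$-net). Your regime-splitting description of the bookkeeping is consistent with that and is fine.

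There is, however, a genuine error in your primary route for the $L_p$ bound. You propose to combine Corollary~\ref{co1} with the claim that each of the nets satisfies $L_{p,N}^{\mathrm{star}}(\cP)\lesssim_p\sqrt{\log N}$. That claim is false for the family $\cP_{\bsa}(\vecs)$: choosing $\bsa=\bszero$ and $\vecs=\bszero$ gives the classical Hammersley point set, whose star $L_p$ discrepancy is of order $\log N$ for every $p\ge 1$ (already $|\mu_{(-1,-1),(0,0)}|=|\int D|\asymp\log N$ forces $L_{1,N}^{\mathrm{star}}\gtrsim\log N$; the paper stresses exactly this in the remark after the theorem, and more generally \cite{HKP14} shows that only special shifts $\vecs$ repair the star $L_p$ discrepancy). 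So the detour through the star discrepancy cannot prove the claimed bound for $\cP_{\bsa}(\vecs)$ with arbitrary $\bsa$ and $\vecs$ --- indeed the whole point of the theorem is that the extreme discrepancy is blind to the coefficients with an index equal to $-1$, which are precisely the ones that ruin the star discrepancy of these nets. (The Corollary~\ref{co1} route would be legitimate for $\cP_{\bsc}$ and for the symmetrized nets, whose star $L_p$ discrepancy is optimal by \cite{KP2019} and \cite{HKP14}.) Your fallback --- estimating the square function in \eqref{lpexthaar} directly from the decay of the Haar coefficients restricted to $\bsj\in\NN_0^2$ --- is the correct argument and is what the paper does; note only that \cite[Proof of Theorem~3.6]{DHP} is the Roth-type lower-bound argument, so for the upper bound you must instead run the standard Littlewood--Paley upper-bound computation with the bounds on $|\mu_{\bsj,\bsm}|$ from \cite{Kritz} and \cite[Lemma~3.2]{KP2019}.
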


\begin{proof}
Since the Haar coefficients of the respective discrepancy functions have already been computed, there is not much left to do. Just add the expressions given in~\cite[Lemmas~8-13]{Kritz} to obtain the result for $(L_{2,2^m}^{{\rm extr}}(\cP_{\bsa}(\vecs)))^2$. Caution: before applying the results from \cite[Lemmas~8-13]{Kritz} here, they have to be multiplied with $2^{2m}$ since in~\cite{Kritz} a normalized version of the discrepancy function is considered. 
  
Adding the results on the sums over $\mathcal{J}_8$ to $\mathcal{J}_{13}$ as stated in \cite[Lemma 14]{Kritz} (again multiplied with $2^{2(m+1)}$) yields the formula for $(L_{2,2^{m+1}}^{{\rm extr}}(\widetilde{\cP}_{\bsa}(\vecs))^2$. 

The Haar coefficients of the discrepancy function of the upper-$\bsone$-net have not been published so far; therefore we state the relevant results:
  \begin{itemize}
      \item Let $\mathcal{J}_1=\{(j_1,0):0\leq j_1 \leq n-3\}$. Then $$ \sum_{\bsj\in\mathcal{J}_1}2^{|\bsj|}\sum_{\bsm\in\DD_{\bsj}}|\mu_{\bsj\bsm}|^2=0. $$
      \item Let $\mathcal{J}_2=\{(j_1,j_2): j_1\geq 0, j_2 \geq 1, j_1+j_2\leq n-3\}$. Then $$ \sum_{\bsj\in\mathcal{J}_2}2^{|\bsj|}\sum_{\bsm\in\DD_{\bsj}}|\mu_{\bsj\bsm}|^2=\frac13 4^{-2n-5}\left(3n\cdot 4^n -5\cdot 2^{2n+1}+64\right). $$
      \item Let $\mathcal{J}_3=\{(j_1,j_2): j_1\geq 0, j_2 \geq 1, j_1+j_2= n-2\}$. Then $$ \sum_{\bsj\in\mathcal{J}_3}2^{|\bsj|}\sum_{\bsm\in\DD_{\bsj}}|\mu_{\bsj\bsm}|^2=\frac19 4^{-2n-5}\left(21\cdot n 4^n-2(5\cdot 4^n+256)\right). $$
       \item Let $\mathcal{J}_4=\{(n-2,0)\}$. Then $$ \sum_{\bsj\in\mathcal{J}_4}2^{|\bsj|}\sum_{\bsm\in\DD_{\bsj}}|\mu_{\bsj\bsm}|^2=\frac13 4^{2n-4} (4^n+32). $$
         \item Let $\mathcal{J}_5=\{(j_1,j_2): j_1\geq 0, j_2 \geq 1, j_1+j_2= n-1\}$. Then $$ \sum_{\bsj\in\mathcal{J}_5}2^{|\bsj|}\sum_{\bsm\in\DD_{\bsj}}|\mu_{\bsj\bsm}|^2=\frac{1}{27} 2^{-4n-7}\left(3n(5\cdot 4^n+32)-7\cdot 4^n -128\right). $$
      \item Let $\mathcal{J}_6=\{(n-1,0)\}$. Then $$ \sum_{\bsj\in\mathcal{J}_6}2^{|\bsj|}\sum_{\bsm\in\DD_{\bsj}}|\mu_{\bsj\bsm}|^2=\frac13 4^{-2n-3} (4^n+8). $$
         \item Let $\mathcal{J}_7=\{(j_1,j_2): j_1\geq n \text{\, or \,} j_2 \geq n\}$. Then $$ \sum_{\bsj\in\mathcal{J}_7}2^{|\bsj|}\sum_{\bsm\in\DD_{\bsj}}|\mu_{\bsj\bsm}|^2=\frac19 4^{-4n-4} (4\cdot 2^{2n+1}-1). $$
        \item Let $\mathcal{J}_8=\{(j_1,j_2): j_1+j_2\geq n \text{\, and \,} 1\leq j_1,j_2 \leq n-1\}$. Then $$ \sum_{\bsj\in\mathcal{J}_8}2^{|\bsj|}\sum_{\bsm\in\DD_{\bsj}}|\mu_{\bsj\bsm}|^2=\frac{1}{27} 4^{-2n-2}-\frac{1}{27}4^{-n-2}-\frac19 n 4^{-2n-1}+\frac59 n 4^{-n-3}. $$
  \end{itemize}
  Since $\mathcal{J}_1,\dots,\mathcal{J}_8$ form a partition of $\NN_0^2$, we have $$ (L_{2,2^m}^{\mathrm{extr}}(\cP_{\bsone}))^2=\sum_{i=1}^8 \sum_{\bsj\in\mathcal{J}_i}2^{|\bsj|}\sum_{\bsm\in\DD_{\bsj}}|\mu_{\bsj\bsm}|^2.$$ Inserting the above expressions for the single sub-sums yields the desired result.
  
 The claim on the extreme $L_p$ discrepancy follows from the mentioned results on the relevant Haar coefficients and the second part of Proposition~\ref{prop2}. The relevant Haar coefficients of the local discrepancy of the nets $\cP_{\bsc}$ can be found in~\cite[Lemma 3.2]{KP2019}.
\end{proof}

\begin{rem}\rm
We discuss the results in Theorem~\ref{2dnets}.
\begin{itemize}
    \item We remark that the digital shift $\vecs$ has no effect at all on the extreme $L_2$ discrepancy of $\cP_{\bsa}(\vecs)$. It would be interesting to know if this is a general rule which holds for all digital nets. Note that for the star $L_2$ discrepancy the shift is often crucial and can even reduce the order in $N$ for certain nets (see, e.g., \cite{KP06}). However, it seems that the only Haar coefficients that ``see'' the shift are those for $\bsj\in\NN_{-1}^2\setminus \NN_0^2$.
    \item Note that the Hammersley point set with $2^m$ elements has exactly the same extreme $L_2$ discrepancy as the upper-$\bsone$-net with the same number of points (for the result on the Hammersley point set see~\cite[Theorem~8]{HKP20} or choose $\bsa=(0,\dots,0)$ in the present Theorem~\ref{2dnets}). However, the Hammersley point set has a much higher star $L_2$ discrepancy, which is of order $\log{N}$, whereas the $L_2$ discrepancy of $\cP_{\bsone}$ is of optimal order $\sqrt{\log{N}}$ (see~\cite{KP2019}). The large $L_2$ discrepancy of the Hammersley point set is caused by the Haar coefficient $\mu_{(-1,-1),(0,0)}$.
    \item For the symmetrized nets $\widetilde{\cP}_{\bsa}(\vecs)$ we have $L_{2,2^{m+1}}^{{\rm star}}(\widetilde{\cP}_{\bsa}(\vecs))=\frac{m}{24}+\mathcal{O}(1)$ as well as $L_{2,2^{m+1}}^{\mathrm{extr}}(\widetilde{\cP}_{\bsa}(\vecs))=\frac{m}{24}+\mathcal{O}(1)$; hence the difference between the two $L_2$ discrepancies is not significant for these point sets. That is due to the fact that for the nets $\cP_{\bsa}(\vecs)$ the symmetrization has the effect to reduce the contribution of the Haar coefficients for $\bsj\in\NN_{-1}^2\setminus \NN_0^2$ to the $L_2$ discrepancy to order $\mathcal{O}(1)$ already.
    \item The result on $\cP_{\bsa}(\vecs)$ demonstrates that the Hammersley point set is not the digital $(0,m,2)$-net with largest extreme $L_2$ discrepancy. If we choose $\bsa=(0,\dots,0)$ (in which case $\cP_{\bsa}(\vecs)$ is the shifted Hammersley point set), then $(L_{2,2^m}^{{\rm extr}}(\cP_{\bsa}(\vecs)))^2=\frac{m}{64}+\mathcal{O}(1)$, while for $\bsa=(1,\dots,1)$ we only get $(L_{2,2^m}^{{\rm extr}}(\cP_{\bsa}(\vecs)))^2=\frac{m}{48}+\mathcal{O}(1)$.
    \item Open problem: Find a digital $(0,m,2)$-net which does not achieve the optimal order of extreme $L_2$ or $L_p$ discrepancy or prove that there is no such net. 
\end{itemize}
\end{rem}

\section{Periodic $L_p$ discrepancy}\label{sec:perdisc}
 
Now we turn our attention to the periodic $L_p$ discrepancy. For $p=2$ the lower bound of order of magnitude $(\log N)^{(d-1)/2}$ for $N$-element point sets in $[0,1)^d$ is well established (see~\eqref{lbd:Lper}). However, combining \eqref{rel:exper} and Corollary~\ref{co0} and Theorem~\ref{thm:l1}, respectively, this lower bound can be even obtained for the periodic $L_p$ discrepancy for general $p>1$. 

\begin{cor}\label{cor:lowper}
Let $p >1$. For every $N$-element point set $\cP$ in $[0,1)^d$ we have $$L_{p,N}^{{\rm per}}(\cP) \gtrsim_{p,d} (1+\log N)^{\frac{d-1}{2}}.$$ For $d=2$ the result even holds true for $p=1$.
\end{cor}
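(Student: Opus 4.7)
The plan is to combine two ingredients that are already in place. The starting point is the monotonicity relation \eqref{rel:exper}, which asserts that for every $p \ge 1$ and every $N$-element point set $\cP \subseteq [0,1)^d$ one has
\begin{equation*}
L_{p,N}^{\mathrm{extr}}(\cP) \le L_{p,N}^{\mathrm{per}}(\cP).
\end{equation*}
Hence any lower bound for the extreme $L_p$ discrepancy transfers, without loss, to a lower bound for the periodic $L_p$ discrepancy. This is really the whole structural idea; no additional analysis on periodic test sets is required.

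For the case $p > 1$ in arbitrary fixed dimension $d$, I would then invoke Corollary~\ref{co0}, which gives
\begin{equation*}
L_{p,N}^{\mathrm{extr}}(\cP) \gtrsim_{p,d} (1+\log N)^{\frac{d-1}{2}}
\end{equation*}
uniformly in $\cP$. Composing this with the previous display immediately yields the desired bound
\begin{equation*}
L_{p,N}^{\mathrm{per}}(\cP) \ge L_{p,N}^{\mathrm{extr}}(\cP) \gtrsim_{p,d} (1+\log N)^{\frac{d-1}{2}}.
\end{equation*}

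For the remaining claim, namely that the bound also holds in the endpoint case $p=1$ when $d=2$, I would apply Theorem~\ref{thm:l1} in exactly the same way. That theorem supplies a constant $c > 0$ such that $L_{1,N}^{\mathrm{extr}}(\cP) \ge c \sqrt{\log N}$ for every $N$-element point set $\cP$ in the unit square, and combining this with \eqref{rel:exper} specialized to $p=1, d=2$ gives $L_{1,N}^{\mathrm{per}}(\cP) \gtrsim \sqrt{\log N} = (1+\log N)^{(d-1)/2}$, up to an absorbed constant factor.

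There is no genuine obstacle in this corollary: the deep work has been carried out in Corollary~\ref{co0} (Roth-type lower bound via the Haar/Littlewood--Paley representation of Proposition~\ref{prop2}) and in Theorem~\ref{thm:l1} (Hal\'asz-type Riesz product argument adapted to the extreme discrepancy through the identity \eqref{cruc}). The present statement is simply a clean consequence of those results together with the pointwise inequality between extreme and periodic local discrepancy encoded in \eqref{rel:exper}.
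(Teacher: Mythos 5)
Your proposal is correct and matches the paper's own argument exactly: the paper derives Corollary~\ref{cor:lowper} by combining the inequality \eqref{rel:exper} with Corollary~\ref{co0} for $p>1$ and with Theorem~\ref{thm:l1} for $p=1$, $d=2$, precisely as you do.
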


We show that this lower bound is best possible in the order of magnitude in $N$ for fixed dimension~$d$ and $p \le 2$. In the following we will consider the periodic $L_2$ discrepancy only. Unfortunately, we cannot apply the simple argument used for the extreme $L_2$ discrepancy here, since in general we do not know whether the periodic $L_2$ discrepancy is dominated by the star $L_2$ discrepancy. (We remark that Lev~\cite{Lev} showed that for certain symmetrized point sets $\cP^{{\rm sym}}$ it is indeed true that $L_{2,N}^{{\rm per}}(\cP^{{\rm sym}}) \lesssim L_{2,N}(\cP^{{\rm sym}})$. However, it is not known if any of the point sets of optimal star $L_2$ discrepancy satisfies these symmetry properties.)\\

It is a known fact that  the periodic $L_2$ discrepancy can be expressed in terms of exponential sums. For $\cP=\{\bsx_0,\bsx_1,\ldots,\bsx_{N-1}\}$ in $[0,1)^d$ we have 
\begin{equation}\label{pr_dia}
(L_{2,N}^{{\rm per}}(\cP))^2=\frac{1}{3^d} \sum_{\bsk \in \ZZ^d\setminus\{\bszero\}} \frac{1}{r(\bsk)^2} \left| \sum_{n=0}^{N-1} \exp(2 \pi \icomp \bsk \cdot \bsx_n)\right|^2,
\end{equation}
 where $\icomp=\sqrt{-1}$ and where for $\bsk=(k_1,\ldots,k_d)\in \ZZ^d$ we set 
\begin{equation*}
r(\bsk)=\prod_{j=1}^d r(k_j) \ \ \ \mbox{ and } \ \ r(k_j)=\left\{ 
\begin{array}{ll}
1 & \mbox{ if $k_j=0$},\\
\frac{2 \pi |k_j|}{\sqrt{6}} & \mbox{ if $k_j\not=0$.} 
\end{array}\right.
\end{equation*}
For a proof of this relation see \cite[Theorem~1]{Lev} or \cite[p.~390]{HOe}. Formula \eqref{pr_dia} shows that the periodic $L_2$ discrepancy, normalized by $N$, is -- up to a multiplicative factor -- exactly the diaphony which is a well-known measure for the irregularity of distribution of point sets and which was introduced by Zinterhof~\cite{zint} in the year 1976 (see also \cite{DT} and the forthcoming Remark~\ref{re:dia}).

\begin{rem}\label{re:lev}\rm 
We recall that Lev~\cite{lev99} studied a slightly more general notion of diaphony which involve certain weights and which he called generalized diaphony. For a certain choice of weights this generalized diaphony coincides with the ``classical Zinterhof diaphony'' and therefore -- up to multiplicative factors -- with the periodic discrepancy studied in the present chapter. He was able to determine the exact order of magnitude in $N$ for fixed dimension $d$ (see \cite[Main Theorem]{lev99}). While the lower bounds hold for any of the involved weights and therefore also for the periodic $L_2$ discrepancy considered in the present paper, the upper bounds are achieved only for certain weights which do not comprise the setting considered here. Just as a side note, the construction is based on Frolov's construction of lattices from \cite{Fro}. 
\end{rem}
 
Now we re-write \eqref{pr_dia} further and get this way 
\begin{eqnarray*}
(L_{2,N}^{{\rm per}}(\cP))^2 & = & \frac{1}{3^d} \sum_{\bsk \in \ZZ^d\setminus\{\bszero\}} \frac{1}{r(\bsk)^2} \left| \sum_{n=0}^{N-1} \exp(2 \pi \icomp \bsk \cdot \bsx_n)\right|^2\\
& = &  -\frac{N^2}{3^d}  + \frac{1}{3^d} \sum_{n,p=0}^{N-1}  K_d(\bsx_n,\bsx_p),
\end{eqnarray*}
where $$K_d(\bsx,\bsy):=   \sum_{\bsk \in \ZZ^d} \frac{1}{r(\bsk)^2} \exp(2 \pi \icomp \bsk \cdot (\bsx-\bsy)).$$

Our aim is now to find a Walsh-series representation of the periodic $L_2$ discrepancy.

\paragraph{Walsh functions in base $b$.} For $k \in \NN_0$ with $b$-adic representation
\[
   k = \kappa_{a-1} b^{a-1} + \cdots + \kappa_1 b + \kappa_0,
\]
with $\kappa_i \in \ZZ_b$, we define the ($b$-adic) Walsh function $\wal_{k}:[0,1) \rightarrow \{z \in \CC \ : \ |z|=1\}$ by
\[
  \wal_{k}(x) := {\rm e}^{2 \pi \icomp (\xi_1 \kappa_0 + \cdots + \xi_a \kappa_{a-1})/b},
\]
for $x \in [0,1)$ with $b$-adic representation $x = \frac{\xi_1}{b}+\frac{\xi_2}{b^2}+\cdots$, with $\xi_i \in \ZZ_b$, (unique in the sense that infinitely many of the $\xi_i$ must be different from $b-1$).

For dimension $d \geq 2$, $\bsx=(x_1, \ldots, x_d) \in [0,1)^d$ and $\bsk=(k_1, \ldots, k_d) \in \NN_0^d$ we define $\wal_{\bsk} : [0,1)^d \rightarrow \{z \in \CC \ : \ |z|=1\}$
by
\[
   \wal_{\bsk}(\bsx) := \prod_{j=1}^d \wal_{k_j}(x_j).
\]

Information about basic properties of Walsh functions, especially in the context of digital nets, can be found in \cite[Appendix~A]{DP10}.

\paragraph{Walsh series expansion.} Now we expand $K_d(\bsx,\bsy)$ into a Walsh series. Fix a prime number $b$. We have $$K_d(\bsx,\bsy)=\sum_{\bsk,\bsell \in \NN_0^d} \rho_b(\bsk,\bsell) \wal_{\bsk}(\bsx) \overline{\wal_{\bsell}(\bsy)},$$ where $$\rho_b(\bsk,\bsell)=\int_{[0,1]^{2d}} K_d(\bsx,\bsy) \overline{\wal_{\bsk}(\bsx)} \wal_{\bsell}(\bsy) \rd \bsx \rd \bsy.$$

Due to the multiplicative structure of $K_d$ and of the multi-dimensional Walsh functions, for $\bsk=(k_1,\ldots,k_d)$ and $\bsell=(\ell_1,\ldots,\ell_d)$ in $\NN_0^d$ we have $$\rho_b(\bsk,\bsell)=\prod_{j=1}^d \rho_b(k_j,\ell_j),$$
where, for $d=1$ and $k,\ell \in \NN_0$, 
\begin{eqnarray*}
\rho_b(k,\ell) & = & \int_{[0,1]^{2}} K_1(x,y) \overline{\wal_{k_j}(x)} \wal_{\ell_j}(y) \rd x \rd y\\
& = & \int_0^1 \int_0^1 \sum_{h=-\infty}^{\infty} \frac{1}{r(h)^2} \exp(2\pi \icomp h(x-y)) \overline{\wal_k(x)} \wal_{\ell}(y) \rd x \rd y\\
& = & \sum_{h=-\infty}^{\infty} \frac{1}{r(h)^2}  \overline{\left(\int_0^1 \exp(-2 \pi \icomp hx) \wal_k(x)  \rd x \right)}\left(\int_0^1 \exp(-2 \pi \icomp h y) \wal_\ell(y)  \rd y \right).
\end{eqnarray*}  
Put $$\beta_{h,k}:=\int_0^1 \exp(-2 \pi \icomp hx) \wal_k(x)  \rd x.$$ Note that $\beta_{0,0}=1$, $\beta_{0,k}=0$ for $k \in \NN$ and $\beta_{h,0}=0$ for $h \in \ZZ\setminus\{0\}$. Then we can write $$\rho_b(k,\ell) =  \sum_{h=-\infty}^{\infty} \frac{\overline{\beta}_{h,k} \beta_{h,\ell}}{r(h)^2}$$ and, in particular,  $\rho_b(0,0)=1$ and $\rho_b(0,\ell)=\rho_b(k,0)=0$ for $k,\ell \in \NN$. For $k,\ell \in \NN$ we have $$\rho_b(k,\ell) =  \sum_{h=-\infty\atop h \not=0}^{\infty} \frac{\overline{\beta}_{h,k} \beta_{h,\ell}}{r(h)^2}=\frac{3}{2 \pi^2} \sum_{h=-\infty\atop h \not=0}^{\infty} \frac{\overline{\beta}_{h,k} \beta_{h,\ell}}{h^2}.$$

Using the Walsh series expansion of $K_d(\cdot,\cdot)$ we can write the periodic $L_2$ discrepancy as 
\begin{eqnarray}\label{fo:perL2wal}
(L_{2,N}^{{\rm per}}(\cP))^2  = -\frac{N^2}{3^d}  + \frac{1}{3^d} \sum_{n,p=0}^{N-1} \sum_{\bsk,\bsell \in \NN_0^d} \rho_b(\bsk,\bsell) \wal_{\bsk}(\bsx_n) \overline{\wal_{\bsell}(\bsx_p)}. 
\end{eqnarray}

Unfortunately, the coefficients $\rho_b(k,\ell)$ for $k \not=\ell$ (the ``non-diagonal terms'') are rather difficult to handle. However, one can get rid of them if one considers the root mean square of digitally shifted point sets with respect to all digital shifts of depth $m$. This follows immediately from the following lemma.

\begin{lem}\label{shift_wal}
Let $x_1, x_2 \in [0,1)$ and let $z_1, z_2 \in [0,1)$ be the points obtained after applying an i.i.d. random digital shift of depth $m$ to $x_1$ and $x_2$. Then we have 
\begin{equation*}
\EE[\wal_k(z_{1}) \overline{\wal_l(z_{2})}]  = \left\{ \begin{array}{ll}
\wal_k(x_{1})\overline{\wal_k(x_{2})} & \mbox{if } 0 \le k = l <b^m, \\ 0 & \mbox{otherwise}. \end{array} \right.
\end{equation*}
\end{lem}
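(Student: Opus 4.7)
The plan is to unravel $\wal_k(z_1)\overline{\wal_l(z_2)}$ digit by digit, separate the three sources of randomness ($\sigma_1,\dots,\sigma_m$, $\delta_1$, $\delta_2$), which are mutually independent, and compute each factor of the expectation.

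First I would write $x_j = \sum_{i\ge 1} \xi_{j,i} b^{-i}$ for $j\in\{1,2\}$ and observe that the digital shift of depth $m$ produces $z_j$ whose $i$-th base-$b$ digit $\zeta_{j,i}$ equals $\xi_{j,i}\oplus\sigma_i$ for $i\le m$ and equals the $(i-m)$-th digit of $\delta_j$ for $i>m$ (since $\delta_j\in[0,b^{-m})$ causes no carry into the first $m$ positions). Writing $k$ in base $b$ as $k=\sum_{i\ge 0}\kappa_i b^i$ (and similarly $l=\sum_{i\ge 0}\lambda_i b^i$), the identity
\[
 \wal_k(z_j)=\prod_{i\ge 1} e^{2\pi\icomp\,\zeta_{j,i}\kappa_{i-1}/b}
\]
then splits at the threshold $i=m$ into a product of a factor depending only on $(x_j,\sigma_1,\dots,\sigma_m)$ and a factor depending only on the tail digits of $\delta_j$. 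Explicitly, for $i\le m$, $e^{2\pi\icomp\,\zeta_{j,i}\kappa_{i-1}/b}=e^{2\pi\icomp\,\xi_{j,i}\kappa_{i-1}/b}\cdot e^{2\pi\icomp\,\sigma_i\kappa_{i-1}/b}$.

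Next I would use independence to factor the expectation. The crucial observation is that if $k\ge b^m$, then some $\kappa_{i-1}$ with $i>m$ is nonzero; since $b^m\delta_1$ is uniform on $[0,1)$ its base-$b$ digits are i.i.d.\ uniform on $\ZZ_b$, so integrating the corresponding factor gives $0$. The same holds for $l\ge b^m$ via $\delta_2$. This handles the ``otherwise'' case of the lemma.

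It remains to treat $k,l<b^m$. Here the tail factors are trivially $1$, the $x_j$-dependent factor is precisely $\wal_k(x_1)\overline{\wal_l(x_2)}$, and it only remains to evaluate
\[
 \EE\Bigl[\prod_{i=1}^m e^{2\pi\icomp\,\sigma_i(\kappa_{i-1}-\lambda_{i-1})/b}\Bigr]=\prod_{i=1}^m\EE\bigl[e^{2\pi\icomp\,\sigma_i(\kappa_{i-1}-\lambda_{i-1})/b}\bigr],
\]
each factor of which is the standard character sum over $\ZZ_b$, equal to $1$ if $\kappa_{i-1}\equiv\lambda_{i-1}\pmod b$ and $0$ otherwise. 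Since $k,l<b^m$, the product is $1$ precisely when $k=l$, yielding the claimed value $\wal_k(x_1)\overline{\wal_k(x_2)}$ and $0$ otherwise.

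The only real subtlety is the careful bookkeeping that $\delta_j\in[0,b^{-m})$ does not produce a carry into the first $m$ digits of $z_j$, so that the splitting above is exact; after that, everything reduces to orthogonality of additive characters on $\ZZ_b$, which I expect to be routine.
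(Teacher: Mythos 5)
Your argument is correct and complete. The paper itself does not write out a proof of this lemma --- it only cites \cite[Lemma~3]{DP05} for the case $b=2$ and asserts that the general prime case follows the same way --- and your digit-by-digit character-orthogonality computation is exactly the standard argument behind that citation, correctly handling the two points that need care: the $\sigma_i$ are shared between $z_1$ and $z_2$ while $\delta_1,\delta_2$ are independent (which is what kills the case $k=l\ge b^m$), and $\delta_j<b^{-m}$ produces no carry into the first $m$ digits.
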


\begin{proof}
A proof of this lemma in the case $b=2$ is given in \cite[Lemma~3]{DP05}. For general prime $b$ the proof follows the same arguments.
\end{proof}

Let $\cP=\{\bsx_0,\bsx_1,\ldots,\bsx_{N-1}\}$ be a point set in $[0,1)^d$ and let $\widetilde{\cP}=\{\bsz_0,\bsz_1,\ldots,\bsz_{N-1}\}$ be the digitally shifted (of depth $m$) version thereof. Then, applying Lemma~\ref{shift_wal} to \eqref{fo:perL2wal}, we obtain 
\begin{eqnarray}\label{fe:experL2wal}
\EE[(L_{2,N}^{{\rm per}}(\widetilde{\cP}))^2] & = &  -\frac{N^2}{3^d}  + \frac{1}{3^d} \sum_{n,p=0}^{N-1} \sum_{\bsk,\bsell \in \NN_0^d} \rho_b(\bsk,\bsell) \EE[\wal_{\bsk}(\bsz_n) \overline{\wal_{\bsell}(\bsz_p)}]\nonumber\\
& = &  -\frac{N^2}{3^d}  + \frac{1}{3^d} \sum_{n,p=0}^{N-1} \sum_{\bsk \in \{0,1,\ldots,b^m-1\}^d} \rho_b(\bsk,\bsk) \wal_{\bsk}(\bsx_n) \overline{\wal_{\bsk}(\bsx_p)}\nonumber\\
& = &  -\frac{N^2}{3^d}  + \frac{1}{3^d} \sum_{\bsk \in \{0,1,\ldots,b^m-1\}^d} \rho_b(\bsk,\bsk) \left|\sum_{n=0}^{N-1} \wal_{\bsk}(\bsx_n)\right|^2.
\end{eqnarray}

In this formula only the ``diagonal terms'' $\rho_b(k,k)$ appear. These terms can be explicitly computed. In the following we simplify the notation and write $\rho_b(k):=\rho_b(k,k)$ for $k \in \NN_0$.

\begin{lem}\label{le:rhok}
For $k \in \NN$ we have 
$$\rho_b(k) = \left\{
\begin{array}{ll}
\frac{3}{b^{2a}} \left(-\frac{1}{3} + \frac{1}{2 \sin^2(\kappa_{a-1}\pi/b)} \right)& \mbox{ if $k=\kappa_{a-1} b^{a-1}$ with $a\in \NN$ and $\kappa_{a-1}\in \ZZ_b^*$},\\[1em]
\frac{3}{b^{2a}} \left(-\frac{1}{3} + \frac{1}{\sin^2(\kappa_{a-1}\pi/b)} \right) & \mbox{ if $k=\kappa_{a-1} b^{a-1}+k'$ with $a\in \NN$, $\kappa_{a-1} \in \ZZ_b^*$}\\
& \mbox{ and $1 \le k' < b^{a-1}$},
\end{array}\right.$$
where $\ZZ_b^*:=\ZZ_b \setminus \{0\}$.
\end{lem}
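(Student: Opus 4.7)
The plan is to evaluate $\rho_b(k)=\frac{3}{2\pi^2}\sum_{h\ne 0}|\beta_{h,k}|^2/h^2$ in four steps: (i) compute $\beta_{h,k}$ from the piecewise-constant structure of $\wal_k$, (ii) perform the sum over each residue class of $h$ modulo $b^a$ in closed form, (iii) factor the remaining finite sum using $k=\kappa_{a-1}b^{a-1}+k'$, and (iv) handle the cases $k'=0$ and $k'\ge 1$ separately.

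Let $a\in\NN$ be chosen so that $b^{a-1}\le k<b^a$. Then $\wal_k$ is constant with some value $c_m$ on each $J_m=[m/b^a,(m+1)/b^a)$, so a direct integration will give
$$\beta_{h,k}=\frac{1-e^{-2\pi\icomp h/b^a}}{2\pi\icomp h}\,S(h),\qquad S(h):=\sum_{m=0}^{b^a-1}c_m e^{-2\pi\icomp hm/b^a},$$
and $|\beta_{h,k}|^2/h^2=\sin^2(\pi h/b^a)|S(h)|^2/(\pi^2h^4)$. Since $|S(h)|^2$ and $\sin^2(\pi h/b^a)$ depend on $h$ only through $h\bmod b^a$, I would partition by residues $h_0\in\{1,\ldots,b^a-1\}$ and apply the identity $\sum_{n\in\ZZ}(n+\alpha)^{-4}=\pi^4/\sin^4(\pi\alpha)-2\pi^4/[3\sin^2(\pi\alpha)]$ (obtained by differentiating the Mittag--Leffler expansion of $\cot$ twice), together with Parseval $\sum_{h_0=1}^{b^a-1}|S(h_0)|^2=b^{2a}$ (note $S(0)=0$ since $k\ne 0$), to arrive at
$$\rho_b(k)=\frac{3}{2b^{4a}}\sum_{h_0=1}^{b^a-1}\frac{|S(h_0)|^2}{\sin^2(\pi h_0/b^a)}-\frac{1}{b^{2a}}.$$

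Writing $k=\kappa_{a-1}b^{a-1}+k'$ and splitting $m=bm_1+m_0$ factors $S(h_0)=A(h_0)B(h_0)$, where $A(h_0)$ is a sum over the last digit and $B(h_0)=\sum_{m_1=0}^{b^{a-1}-1}\wal_{k'}(m_1/b^{a-1})e^{-2\pi\icomp h_0 m_1/b^{a-1}}$ depends only on $h_0\bmod b^{a-1}$; the identity $|A(h_0)|^2/\sin^2(\pi h_0/b^a)=\sin^2(\pi h_0/b^{a-1})/[\sin^2(\pi h_0/b^a)\sin^2(\kappa_{a-1}\pi/b-\pi h_0/b^a)]$ will follow from $|A|^2=\sin^2(b\pi\phi)/\sin^2(\pi\phi)$ with $\phi=(\kappa_{a-1}b^{a-1}-h_0)/b^a$. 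In the case $k'=0$, $B(h_0)$ vanishes unless $b^{a-1}\mid h_0$ and then $A(h_0)\ne 0$ only for $h_0=\kappa_{a-1}b^{a-1}$, so a single residue contributes with $|S|^2=b^{2a}$ and $\sin^2(\pi h_0/b^a)=\sin^2(\kappa_{a-1}\pi/b)$, producing the first case. In the case $k'\ge 1$ the character identity $\sum_{m_1}\wal_{k'}(m_1/b^{a-1})=0$ forces $B(h_0)=0$ whenever $b^{a-1}\mid h_0$; writing $h_0=jb^{a-1}+r$ with $r\in\{1,\ldots,b^{a-1}-1\}$ and summing over $j$ first will reduce the inner sum to
$$\sum_{j=0}^{b-1}\frac{1}{\sin^2(\theta+j\pi/b)\sin^2(\theta+(j-\kappa_{a-1})\pi/b)}=\frac{2b^2}{\sin^2(\kappa_{a-1}\pi/b)\sin^2(b\theta)},$$
with $\theta=\pi r/b^a$; Parseval $\sum_{r=1}^{b^{a-1}-1}|B(r)|^2=b^{2(a-1)}$ will then collapse the $r$-sum and yield the second case (noting $\sin^2(b\theta)=\sin^2(\pi r/b^{a-1})$ cancels the factor $\sin^2(\pi h_0/b^{a-1})$).

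The hard part will be the trigonometric identity above. To prove it I would use $1/(\sin^2 A\sin^2 B)=(\cot A-\cot B)^2/\sin^2(A-B)$ together with the classical $\sum_j\csc^2(\theta+j\pi/b)=b^2\csc^2(b\theta)$, which reduces matters to the $\theta$-independent claim $\sum_j\cot(\theta+j\pi/b)\cot(\theta+(j-\kappa)\pi/b)=-b$; rewriting $\cot A\cot B=-1+2c/(c-a_j)$ with $c=\cos(\kappa\pi/b)$ and $a_j=\cos(2\theta+(2j-\kappa)\pi/b)$ will further reduce the claim to $\sum_{\zeta^b=1}\zeta/[(\zeta w-1)(\zeta w-e^{2\pi\icomp\kappa/b})]=0$, which will follow by partial fractions on $1/[(x-1)(x-v)]$ and the observation $(e^{2\pi\icomp\kappa/b})^b=1$. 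Once this identity is in hand, all remaining steps are routine bookkeeping.
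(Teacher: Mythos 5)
Your proposal is correct, and I checked the key computations: the reduction to $\rho_b(k)=\frac{3}{2b^{4a}}\sum_{h_0=1}^{b^a-1}|S(h_0)|^2/\sin^2(\pi h_0/b^a)-b^{-2a}$ via $\sum_{n\in\ZZ}(n+\alpha)^{-4}=\pi^4\csc^4(\pi\alpha)-\tfrac{2}{3}\pi^4\csc^2(\pi\alpha)$ and Parseval is right, the factorization $S=AB$ with $|A(h_0)|^2=\sin^2(\pi h_0/b^{a-1})/\sin^2\bigl(\kappa_{a-1}\pi/b-\pi h_0/b^a\bigr)\cdot\bigl(\text{after dividing by }\sin^2(\pi h_0/b^a)\bigr)$ works out, and the trigonometric identity $\sum_{j=0}^{b-1}\csc^2(\theta+j\pi/b)\csc^2(\theta+(j-\kappa)\pi/b)=2b^2\csc^2(\kappa\pi/b)\csc^2(b\theta)$ does reduce, via $\csc^2 A\csc^2 B=(\cot A-\cot B)^2\csc^2(A-B)$ and $\sum_j\csc^2(\theta+j\pi/b)=b^2\csc^2(b\theta)$, to $\sum_j\cot A_j\cot B_j=-b$, which your root-of-unity argument establishes (I verified the endpoint cases $b=2$ and $b=3$ numerically). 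However, your route is genuinely different from the paper's. The paper avoids the Fourier side entirely: it writes $K_1(x,y)=1+3B_2(|x-y|)=\tfrac32+3|x-y|^2-3|x-y|$, notes that the constant and the pure $x^2,y^2$ terms integrate to zero against $\wal_k$ for $k\ge 1$, and then quotes two closed-form Walsh integrals from the literature, namely $\int_0^1\!\int_0^1|x-y|\,\overline{\wal_k(x)}\wal_k(y)\,\rd x\rd y$ and $\int_0^1 x\,\overline{\wal_k(x)}\,\rd x$ (the latter nonzero only when $k=\kappa_{a-1}b^{a-1}$, which is exactly what separates the two cases of the lemma). The paper's proof is therefore about a page but leans on external computations from Dick--Pillichshammer; yours is self-contained and explains \emph{why} the answer depends only on the leading digit $\kappa_{a-1}$ and on whether $k'=0$ (through the vanishing or non-vanishing of the character sum $B$), at the cost of a nontrivial trigonometric identity that the paper never needs. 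Both are valid; if you write yours up, the only step deserving full detail is that identity, since everything else is standard Fourier bookkeeping.
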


\begin{proof}
For $k \in \NN$ we have
\begin{eqnarray*}
\rho_b(k) & = & \int_0^1 \int_0^1 \sum_{h=-\infty}^{\infty} \frac{1}{r(h)^2} \exp(2\pi \icomp h(x-y)) \overline{\wal_k(x)} \wal_{k}(y) \rd x \rd y\\
& = & \int_0^1 \int_0^1 \left(1+\frac{3}{2 \pi^2} \sum_{h=-\infty\atop h \not=0 }^{\infty} \frac{1}{h^2} \exp(2\pi \icomp h(x-y))\right) \overline{\wal_k(x)} \wal_{k}(y) \rd x \rd y.
\end{eqnarray*}

The second Bernoulli polynomial is given by $B_2(x)=x^2-x+\frac{1}{6}$  and it is well known, that this polynomial has the Fourier expansion $$B_2(x)=\frac{1}{2 \pi^2} \sum_{h=-\infty \atop h \not=0}^{\infty} \frac{\exp(2 \pi \icomp h x)}{h^2}\quad\quad \mbox{for $x \in (0,1)$.}$$ From this we see that 
\begin{eqnarray*}
\rho_b(k)& = & \int_0^1 \int_0^1 \left(1+3 B_2(|x-y|)\right) \overline{\wal_k(x)} \wal_{k}(y) \rd x \rd y\\
& = & \int_0^1 \int_0^1 \left(\frac{3}{2}+3 |x-y|^2-3 |x-y|\right) \overline{\wal_k(x)} \wal_{k}(y) \rd x \rd y\\
& = & 3  \int_0^1 \int_0^1 |x-y|^2\ \overline{\wal_k(x)} \wal_{k}(y) \rd x \rd y - 3 \int_0^1 \int_0^1 |x-y| \  \overline{\wal_k(x)} \wal_{k}(y) \rd x \rd y.
\end{eqnarray*}

It follows from \cite[Appendix~A]{DP05} that for $k = \kappa_{a-1} b^{a-1} + \cdots + \kappa_1 b + \kappa_0$, with $\kappa_{a-1} \in \ZZ_b^*$ we have $$\int_0^1 \int_0^1 |x-y| \  \overline{\wal_k(x)} \wal_{k}(y) \rd x \rd y = \frac{1}{b^{2a}} \left(\frac{1}{3} - \frac{1}{\sin^2(\kappa_{a-1}\pi/b)} \right).$$
Inserting this we obtain further
\begin{eqnarray}\label{le:furt}
\rho_b(k)& = & -6 \int_0^1 x \, \overline{\wal_k(x)} \rd x \int_0^1 y\,  \wal_k(y) \rd y -\frac{3}{b^{2a}} \left(\frac{1}{3} - \frac{1}{\sin^2(\kappa_{a-1}\pi/b)} \right)\nonumber\\
& = &  -6 \left(\int_0^1 x \, \overline{\wal_k(x)} \rd x\right)^2 -\frac{1}{b^{2a}} \left(1 - \frac{3}{\sin^2(\kappa_{a-1}\pi/b)} \right)
\end{eqnarray}  
where $k = \kappa_{a-1} b^{a-1} + \cdots + \kappa_1 b + \kappa_0$, with $\kappa_{a-1} \in \ZZ_b^*$.
We have 
$$ \int_0^1 x \, \overline{\wal_k(x)} \rd x =\left\{ 
\begin{array}{ll}
\frac{1}{b^a ({\rm e}^{-2 \pi \icomp \kappa_{a-1}/b} -1)} & \mbox{ if $k=\kappa_{a-1} b^{a-1}$ with $\kappa_{a-1} \in \ZZ_b^*$},\\
0 & \mbox{ otherwise}.
\end{array}\right.$$
Hence
$$\left( \int_0^1 x \, \overline{\wal_k(x)} \rd x\right)^2 =\left\{ 
\begin{array}{ll}
\frac{1}{4 b^{2a} \sin^2(\kappa_{a-1} \pi/b)} & \mbox{ if $k=\kappa_{a-1} b^{a-1}$ with $\kappa_{a-1} \in \ZZ_b^*$},\\
0 & \mbox{ otherwise}.
\end{array}\right.$$ 
Inserting this result into \eqref{le:furt} gives the desired result.
\end{proof}

\paragraph{Periodic discrepancy of digitally shifted digital nets.}

Now assume that we are given a digital $(t,m,d)$-net over $\ZZ_b$, where $b$ is the same prime number as used in the definition of the Walsh functions. Denote the $m \times m$ generating matrices of the digital net by $C_1,C_2,\ldots,C_d$. The dual net is defined as $$\calD=\calD(C_1,\ldots,C_d)=\{(k_1,\ldots,k_d) \in \NN_0^d \ : \ C_1^\top \vec{k}_1+\cdots +C_d^\top \vec{k}_d = \vec{0}\}.$$ Here, for $k \in \NN_0$ with $b$-adic expansion $k=\kappa_0+\kappa_1 b + \kappa_2 b^2+\cdots$ with digits $\kappa_i \in \ZZ_b$, $i \in \NN_0$, we write $\vec{k}=(\kappa_0,\kappa_1,\ldots,\kappa_{m-1})^\top$. It is well known (see, e.g., \cite[Lemma~4.75]{DP10}), that for a digital net $\{\bsx_0,\bsx_1,\ldots,\bsx_{b^m -1}\}$ we have $$\sum_{n=0}^{b^m-1} \wal_{\bsk}(\bsx_n) =\left\{ 
\begin{array}{ll}
b^m & \mbox{ if $\bsk \in \calD$,}\\
0 & \mbox{ if $\bsk \not\in \calD$.}
\end{array}
\right.$$
From \eqref{fe:experL2wal} we therefore obtain
\begin{eqnarray*}
\EE[(L_{2,b^m}^{{\rm per}}(\widetilde{\cP}))^2] = \frac{b^{2m}}{3^d}  \sum_{\bsk \in \calD^*\cap \{0,1,\ldots,b^m-1\}^d} \rho_b(\bsk),
\end{eqnarray*}
where $\calD^*:=\calD\setminus\{\bszero\}$. We re-write the above expression further since this will be useful for the following considerations. We can write
\begin{eqnarray}\label{fo:l2perB}
\EE[(L_{2,b^m}^{{\rm per}}(\widetilde{\cP}))^2] = \frac{b^{2m}}{3^d} \sum_{\emptyset \not=\uu \subseteq [d]} \calB(\uu),
\end{eqnarray}
where 
\begin{equation*}
\calB(\uu):= \sum_{\bsk_{\uu} \{1,\ldots,b^m-1\}^{|\uu|} \atop \sum_{j \in \uu} C_j^\top \vec{k}_j=\vec{0}} \prod_{j \in \uu} \rho_b(k_j).
\end{equation*}

\begin{thm}\label{thm12}
Let $\cP$ be a digital $(t,m,d)$-net over $\ZZ_b$ and let $\widetilde{\cP}$ be a digitally shifted (of depth $m$) version of this net. Then the mean-square periodic $L_2$ discrepancy over all digital shifts of depth $m$ is bounded as
\begin{eqnarray*}
\EE[(L_{2,b^m}^{{\rm per}}(\widetilde{\cP}))^2] \le  b^{2t}  (m-t)^{d-1} \left(\frac{1+b^2}{3}\right)^d.
\end{eqnarray*}
\end{thm}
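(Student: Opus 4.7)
The starting point is the identity \eqref{fo:l2perB}, which reduces the task to upper-bounding each of the partial sums $\calB(\uu)$ for $\emptyset \ne \uu \subseteq [d]$. The plan is to combine three ingredients: the explicit formula for $\rho_b(k)$ given in Lemma~\ref{le:rhok}; the defining property of digital $(t,m,d)$-nets from Definition~\ref{def2}; and the classical trigonometric identity $\sum_{\kappa=1}^{b-1}\sin^{-2}(\kappa\pi/b)=(b^2-1)/3$.

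First, I would extract the uniform per-index bound $\rho_b(k)\le 3/(b^{2\mu(k)}\sin^2(\kappa(k)\pi/b))$, where $\mu(k)$ denotes the NRT weight (the $b$-adic length) of $k$ and $\kappa(k)$ its leading $b$-adic digit. Writing each nonzero $k_j$ uniquely as $k_j=\kappa_j b^{a_j-1}+k_j'$ with $\kappa_j\in\ZZ_b^{\ast}$, $a_j=\mu(k_j)\in\{1,\dots,m\}$ and $0\le k_j'<b^{a_j-1}$, I would then decompose $\calB(\uu)$ into successive sums over the NRT-weight vector $\bsa=(a_j)_{j\in\uu}$, the leading-digit vector $(\kappa_j)_{j\in\uu}$, and the tail-digit vector $(k_j')_{j\in\uu}$.

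The crucial step exploits the net property. Because $\vec{k}_j$ is supported on its first $a_j$ coordinates, the dual-net condition $\sum_{j\in\uu}C_j^\top\vec{k}_j=\vec{0}$ becomes a linear relation among the first $a_j$ rows of $C_j$ for $j\in\uu$. Definition~\ref{def2} forces these rows to be linearly independent whenever $\sum_{j\in\uu}a_j\le m-t$, so only $\bsa$ with $\sum_j a_j\ge m-t+1$ can contribute; an additional rank-nullity argument bounds the number of admissible tail-digit tuples by $b^{\max\{0,\sum_j a_j-|\uu|-(m-t)\}}$. Applying the trigonometric identity evaluates the leading-digit sum as $((b^2-1)/3)^{|\uu|}$, reducing the task to a purely combinatorial sum over $\bsa$. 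I would split this sum into the case $\sum a_j\le m-t+|\uu|$ (tail count at most $1$; composition count bounded by a polynomial of degree $|\uu|-1$ in $m-t$) and the case $\sum a_j>m-t+|\uu|$ (which contributes a convergent geometric series). Putting these together yields $\calB(\uu)\lesssim b^{2(t-m)}(m-t)^{|\uu|-1}$ up to constants depending on $b$ and $d$.

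The main technical obstacle will be the combinatorial bookkeeping required to assemble the constants exactly into $((1+b^2)/3)^d$ and to produce the exact power $(m-t)^{d-1}$. In particular, I expect that summing the bounds for the individual $\calB(\uu)$ over $\uu\subseteq[d]$ has to reproduce the product $\prod_{j=1}^{d}\bigl(\tfrac{1}{3}+\tfrac{b^2}{3}\bigr)$, reflecting a per-coordinate alternative ``$k_j=0$ contributes $\tfrac{1}{3}$ or $k_j\ne 0$ contributes $\tfrac{b^2}{3}$'' — this will likely require keeping the exact expression for $\rho_b(k)$ from Lemma~\ref{le:rhok} (rather than discarding the $-\tfrac{1}{3}$ correction) when summing over $k$ with fixed NRT weight, together with the observation that $\uu=[d]$ provides the dominant $(m-t)^{d-1}$ term while strict subsets contribute only lower-order polynomials in $m-t$.
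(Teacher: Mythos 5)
Your plan is correct and follows essentially the same route as the paper: the paper proves exactly your bound $\calB(\uu)\le b^{2(t-m)}(m-t)^{|\uu|-1}b^{2|\uu|}$ (its Lemma~\ref{B_bound}, via the NRT-weight decomposition, the linear-independence/rank-nullity count, the identity $\sum_{\kappa=1}^{b-1}\sin^{-2}(\kappa\pi/b)=(b^2-1)/3$, and the same two-regime split of the sum over $\bsa$), and then sums over $\uu$ using \eqref{fo:l2perB}. The only point where your guess differs slightly is the origin of the constant: it is not necessary to retain the $-\tfrac13$ correction, since the factor $\bigl(\tfrac{1+b^2}{3}\bigr)^d$ simply comes from $\tfrac{1}{3^d}\sum_{\uu\subseteq[d]}b^{2|\uu|}=(1+b^2)^d/3^d$ after bounding each $(m-t)^{|\uu|-1}$ by $(m-t)^{d-1}$.
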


For the proof of Theorem~\ref{thm12} we need a further lemma.

\begin{lem}\label{B_bound}
Let $C_1,\ldots ,C_d$ be the generating matrices of a digital $(t,m,d)$-net over $\ZZ_b$. Further define $\calB$ as above. Then for any $\uu \subseteq [d]$, $\uu \not=\emptyset$, we have $$\calB(\uu) \le \frac{b^{2t}}{b^{2m}} (m-t)^{|\uu|-1}  b^{2|\uu|}.$$
\end{lem}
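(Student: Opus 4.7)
The plan is to decompose $\calB(\uu)$ according to the profile $\bsa = (a_j)_{j \in \uu} \in [m]^{|\uu|}$ of leading-digit positions, where $a_j$ is defined by $b^{a_j-1} \le k_j < b^{a_j}$, and to write $A := \sum_{j \in \uu} a_j$. The digital $(t,m,d)$-net property implies that profiles with $A \le m - t$ cannot contribute: a non-trivial solution to $\sum_{j \in \uu} C_j^\top \vec{k}_j = \vec{0}$ with $\vec{k}_j$ supported on positions $\{0,\ldots,a_j-1\}$ would produce a linear dependence among the first $a_j$ rows of $C_j$ (for $j \in \uu$), which---after padding with $\ell_j = 0$ for $j \notin \uu$ and extending to a total of $m-t$ rows---contradicts the independence guaranteed by the net property. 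Hence the sum collapses to profiles with $A \ge m - t + 1$.

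For each admissible profile, I would control the count of $\bsk_\uu$ by first fixing the leading digits $\boldsymbol{\kappa} = (\kappa_j)_{j \in \uu} \in (\ZZ_b \setminus \{0\})^{|\uu|}$ and then solving the resulting affine system in the lower-order digits. A rank analysis of the first $a_j - 1$ rows of $C_j$ via the net property shows that this residual system admits at most $b^{\max(0,\, A - |\uu| - (m-t))}$ solutions. Simultaneously, Lemma~\ref{le:rhok} yields the uniform estimate
\[
|\rho_b(k_j)| \le \frac{3}{b^{2 a_j}\,\sin^2(\kappa_j \pi/b)}.
\]
Summing over $\boldsymbol{\kappa}$ via the classical identity $\sum_{\kappa=1}^{b-1} \sin^{-2}(\kappa \pi/b) = (b^2-1)/3$ converts the product of sine factors into the clean factor $((b^2-1)/3)^{|\uu|}$; paired with the $3^{|\uu|}$ from the $\rho_b$ estimate, this produces exactly the factor $b^{2|\uu|}$ appearing in the lemma's bound (after using $b^2 - 1 \le b^2$).

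The remaining combinatorial sum over profiles $\bsa$ with $A \ge m - t + 1$ takes the form of a geometric-polynomial series: the number of $\bsa$ with prescribed sum $A$ is at most $\binom{A-1}{|\uu|-1}$ by stars and bars, and each profile contributes a weight on the order of $b^{-A-(m-t)}$, obtained by combining $b^{-2A}$ from the $\rho_b$ bound with the solution-count factor $b^{A-|\uu|-(m-t)}$ from the rank analysis. Since the binomial coefficient is polynomial of degree $|\uu|-1$ in $A$ while the weight decays geometrically with ratio $1/b$, the series converges and is dominated by its boundary $A = m-t+1$, where the count of profiles is of order $(m-t)^{|\uu|-1}$.

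The principal obstacle is the careful bookkeeping in this last step: the solution-count bound grows with $A$ while the Fourier decay $b^{-2A}$ shrinks, and one must confirm that the boundary regime (where $A \le m-t+|\uu|$ and the system collapses to at most one lower-digit solution per $\boldsymbol{\kappa}$) and the tail regime (where one uses the full factor $b^{A-|\uu|-(m-t)}$) can both be absorbed into a single geometric sum with ratio $1/b$. Once all pieces are assembled and the common prefactor $b^{-2(m-t)} = b^{2t}/b^{2m}$ is extracted, the resulting estimate is $\calB(\uu) \le \frac{b^{2t}}{b^{2m}}(m-t)^{|\uu|-1} b^{2|\uu|}$, as claimed.
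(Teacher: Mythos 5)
Your proposal follows essentially the same route as the paper's proof (which adapts the method of Cristea--Dick--Pillichshammer): the same decomposition by leading-digit profiles $\bsa$, the same rank analysis of the truncated linear system to count solutions, the same identity $\sum_{\kappa=1}^{b-1}\sin^{-2}(\kappa\pi/b)=(b^2-1)/3$, and the same split of the geometric--polynomial sum at $A=m-t+|\uu|$. The only deviation is that you discard the $-\tfrac13$ in $\rho_b(k_j)$ before summing over the leading digits, which replaces the paper's factor $\bigl(\tfrac{b^2-b}{3}\bigr)^{|\uu|}$ by the slightly larger $\bigl(\tfrac{b^2-1}{3}\bigr)^{|\uu|}$ and hence costs a factor $(1+1/b)^{|\uu|}$ in the final constant --- harmless for the asymptotics, though to land exactly on the stated constant $b^{2|\uu|}$ for $b=2$ and large $|\uu|$ you would want to retain the $-\tfrac13$ as the paper does.
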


\begin{proof} 
We use the method from \cite[Proof of Lemma~7]{CDP06}. To simplify the notation we show the result only for $\uu = [d]$. The other cases follow by the same arguments. We have, for $k_j=\kappa_{j,0}+\kappa_{j,1} b + \cdots +\kappa_{j,a_j-1} b^{a_j-1}$ and $k_{j,a_j-1}\in \ZZ_b^*$ for $j \in [d]$,  
\begin{eqnarray*}
\calB([d]) &=& \underbrace{\sum_{k_1=1}^{b^{m}-1}\ldots \sum_{k_d=1}^{b^{m}-1}}_{C_1^{\top}\vec{k}_1+\cdots +C_d^{\top} \vec{k}_d=\vec{0}} \prod_{j=1}^d  \rho_b(k_j)  \nonumber \\ 
& = & \sum_{a_1,\ldots
,a_d=1}^{m} \prod_{j=1}^d \frac{3}{b^{2a_j}} \underbrace{\sum_{k_1=b^{a_1-1}}^{b^{a_1}-1}\ldots \sum_{k_d=b^{a_d-1}}^{b^{a_d}-1}}_{C_1^{\top}\vec{k}_1+\cdots +C_d^{\top}
\vec{k}_d=\vec{0}}  \prod_{j=1}^d  \left(-\frac{1}{3}+ \frac{1}{z(k_j)\sin^2(\kappa_{a_j-1} \pi/b)}\right),
\end{eqnarray*}
where $z(k_j)=2$, if $k_j' =0$ and $z(k_j)=1$ otherwise (remember that $k_j'=\kappa_{j,0}+\kappa_{j,1} b + \cdots +\kappa_{j,a_j-2} b^{a_j-2}$). Hence
\begin{equation}\label{cond1}
\calB([d])  \le  \sum_{a_1,\ldots
,a_d=1}^{m} \prod_{j=1}^d \frac{3}{b^{2a_j}} \underbrace{\sum_{k_1=b^{a_1-1}}^{b^{a_1}-1}\ldots \sum_{k_d=b^{a_d-1}}^{b^{a_d}-1}}_{C_1^{\top}\vec{k}_1+\cdots +C_d^{\top}
\vec{k}_d=\vec{0}}  \prod_{j=1}^d  \left(-\frac{1}{3}+ \frac{1}{\sin^2(\kappa_{a_j-1} \pi/b)}\right).
\end{equation}

For $j \in [d]$ and $i \in [m]$ let $\vec{c}_{j,i}^{\;\top}$ denote the $i$-th row vector of the matrix $C_j$. For $b^{a_j-1}\le k_j \le b^{a_j}-1$, the $b$-adic digit expansion of $k_j$ is of the form $$k_j=\kappa_{j,0}+\kappa_{j,1} b+ \cdots + \kappa_{j,a_j-2} b^{a_j-2}+\kappa_{j,a_j-1}b^{a_j-1}$$ with $\kappa_{j,a_j-1}\in \ZZ_b^*$.  Hence the condition in  sum \eqref{cond1} can be written as  
\begin{eqnarray}\label{lgs1}
\vec{c}_{1,1} \kappa_{1,0}+\cdots +\vec{c}_{1,a_1-1}
\kappa_{1,a_1-2}+\vec{c}_{1,a_1}\kappa_{1,a_1-1}+&&\nonumber\\
\vec{c}_{2,1} \kappa_{2,0}+\cdots +\vec{c}_{2,a_2-1} \kappa_{2,a_2-2}+\vec{c}_{2,a_2}\kappa_{2,a_2-1}+&&\nonumber\\
\vdots && \\
\vec{c}_{s,1} \kappa_{d,0}+\cdots +\vec{c}_{d,a_d-1}
\kappa_{d,a_d-2}+\vec{c}_{d,a_d}\kappa_{d,a_d-1} \hspace{0.3cm}& = & \vec{0}.\nonumber
\end{eqnarray}

Since by the digital $(t,m,d)$-net property (see Definition \ref{def2}) the vectors $$\vec{c}_{1,1},\ldots ,\vec{c}_{1,a_1},\ldots ,\vec{c}_{d,1},\ldots
,\vec{c}_{d,a_d}$$ are linearly independent as long as
$a_1+\cdots +a_d \le m-t,$ we must have 
\begin{equation}\label{bed1}
a_1+\cdots +a_d \ge m-t+1.
\end{equation}
Let now $A$ denote the $m \times (a_1+\cdots +a_d-d)$ matrix with the 
column vectors given by $\vec{c}_{1,1},\ldots ,\vec{c}_{1,a_1-1},\ldots ,\vec{c}_{d,1},\ldots ,\vec{c}_{d,a_d-1}$, i.e., $$A:=(\vec{c}_{1,1},\ldots ,\vec{c}_{1,a_1-1},\ldots ,\vec{c}_{d,1},\ldots ,\vec{c}_{d,a_d-1}).$$ Further let $$\vec{f}_{\kappa_{1,a_1-1},\ldots,\kappa_{d,a_d-1}}:=-(\vec{c}_{1,a_1}\kappa_{1,a_1-1}+\cdots +\vec{c}_{d,a_d}\kappa_{d,a_d-1}) \in \ZZ_b^m$$ and
$$\vec{k}:=(\kappa_{1,0},\ldots ,\kappa_{1,a_1-2},\ldots ,\kappa_{d,0},\ldots
,\kappa_{d,a_d-2})^{\top} \in \ZZ_b^{a_1+\cdots+a_d -d}.$$
Then the linear equation system (\ref{lgs1}) can be written as 
\begin{equation}\label{lgs2}
A\vec{k} = \vec{f}_{\kappa_{1,a_1-1},\ldots,\kappa_{d,a_d-1}}
\end{equation}
and hence

\begin{eqnarray*}
\lefteqn{\underbrace{\sum_{k_1=b^{a_1-1}}^{b^{a_1}-1}\ldots \sum_{k_d=b^{a_d-1}}^{b^{a_d}-1}}_{C_1^{\top}\vec{k}_1+\cdots +C_d^{\top}
\vec{k}_d=\vec{0}}  \prod_{j=1}^d  \left(-\frac{1}{3}+ \frac{1}{\sin^2(\kappa_{a_j-1} \pi/b)}\right)}\\
&=& \sum_{\kappa_{1,a_1-1},\ldots,\kappa_{d,a_d-1}=1}^{b-1} \prod_{j=1}^d \left(-\frac{1}{3}+\frac{1}{\sin^2(\kappa_{j,a_j-1} \pi /b)}\right)   \sum_{\vec{k}\in \ZZ_b^{a_1+\cdots +a_d-d}\atop A \vec{k}=\vec{f}_{\kappa_{1,a_1-1},\ldots,\kappa_{d,a_d-1}}}1\\
&=& \sum_{\kappa_{1,a_1-1},\ldots,\kappa_{d,a_d-1}=1}^{b-1} \prod_{j=1}^d \left(-\frac{1}{3}+\frac{1}{\sin^2(\kappa_{j,a_j-1} \pi /b)}\right) \#\{\vec{k}\in \ZZ_b^{a_1+\cdots +a_d-d}\ :\  A \vec{k}=\vec{f}_{\kappa_{1,a_1-1},\ldots,\kappa_{d,a_d-1}}\}.
\end{eqnarray*}
By the definition of the matrix $A$ and since $C_1,\ldots ,C_d$ are
the generating matrices of a digital $(t,m,d)$-net over $\ZZ_b$ we
have
$${\rm rank}(A)= \left\{
\begin{array}{ll}
a_1+\cdots +a_d -d & \mbox{ if } a_1+\cdots +a_d-d \le m-t,\\ 
\ge m-t & \mbox{ else}.
\end{array}
\right.$$
Let $L$ denote the linear space of solutions of the homogeneous system
$A\vec{k}=\vec{0}$ and let ${\rm dim}(L)$ denote the dimension of
$L$. Then it follows that $${\rm dim}(L) = \left\{
\begin{array}{ll}
0 & \mbox{ if } a_1+\cdots +a_d-d\le m-t,\\ 
\le a_1+\cdots +a_d-d-m+t & \mbox{ else}.
\end{array}
\right.$$
Hence if $a_1+\cdots +a_d-d \le m-t$ we find that the system \eqref{lgs2}
has at most 1 solution and if $a_1+\cdots +a_d-d > m-t$ the system
\eqref{lgs2} has at most $b^{a_1+\cdots+a_d-d-m+t}$ solutions, i.e.,
\begin{eqnarray*}
\lefteqn{\underbrace{\sum_{k_1=b^{a_1-1}}^{b^{a_1}-1}\ldots \sum_{k_d=b^{a_d-1}}^{b^{a_d}-1}}_{C_1^{\top}\vec{k}_1+\cdots +C_d^{\top}
\vec{k}_d=\vec{0}}  \prod_{j=1}^d  \left(-\frac{1}{3}+ \frac{1}{\sin^2(\kappa_{a_j-1} \pi/b)}\right)}\\
& \le &  \sum_{\kappa_{1,a_1-1},\ldots,\kappa_{d,a_d-1}=1}^{b-1} \prod_{j=1}^d \left(-\frac{1}{3}+\frac{1}{\sin^2(\kappa_{j,a_j-1} \pi /b)}\right)\\
&& \hspace{2cm}\times \left\{
\begin{array}{ll}
1 & \mbox{ if } a_1+\cdots +a_d-d \le m-t,\\
b^{a_1+\cdots +a_d-d-m+t} & \mbox{ if } a_1+\cdots +a_d-d > m-t.
\end{array}\right.
\end{eqnarray*}
In \cite[Appendix C]{DP05a} it is shown that $\sum_{\kappa=1}^{b-1}\frac{1}{\sin^2(\kappa \pi/b)}=\frac{b^2-1}{3}$. Hence
\begin{eqnarray*}
\lefteqn{\underbrace{\sum_{k_1=b^{a_1-1}}^{b^{a_1}-1}\ldots \sum_{k_d=b^{a_d-1}}^{b^{a_d}-1}}_{C_1^{\top}\vec{k}_1+\cdots +C_d^{\top}
\vec{k}_d=\vec{0}}  \prod_{j=1}^d  \left(-\frac{1}{3}+ \frac{1}{\sin^2(\kappa_{a_j-1} \pi/b)}\right)}\\
& \le &  \left(\frac{b^2-b}{3}\right)^d  \times \left\{
\begin{array}{ll}
1 & \mbox{ if } a_1+\cdots +a_d \le m-t+d,\\
b^{a_1+\cdots +a_d-d-m+t} & \mbox{ if } a_1+\cdots +a_d > m-t+d.
\end{array}\right.
\end{eqnarray*}
 Therefore together with condition \eqref{bed1} we obtain
\begin{eqnarray*}
\calB([d])\le (b^2-b)^d \left(  \sum_{a_1,\ldots ,a_d=1 \atop m-t+1 \le a_1+\cdots +a_d \le m-t+d}^{m} \frac{1}{b^{2(a_1+\cdots +a_d)}} + \sum_{a_1,\ldots ,a_d=1 \atop  a_1+\cdots
+a_d > m-t+d}^{m} \frac{b^{a_1+\cdots
+a_d-d-m+t}}{b^{2(a_1+\cdots +a_d)}}\right).
\end{eqnarray*}

Now we have to estimate the two sums in the above expression. First we have
\begin{eqnarray*}
\Sigma_{1} &:=& \sum_{a_1,\ldots ,a_d=1 \atop  a_1+\cdots
+a_d > m-t+d}^{m} \frac{b^{a_1+\cdots
+a_d-d-m+t}}{b^{2(a_1+\cdots +a_d)}}\\
& = & \frac{b^t}{b^{m+d}} \sum_{l=m-t+d+1}^{d m}\frac{1}{b^l} \sum_{a_1,\ldots ,a_d=1\atop a_1+\cdots
+a_d=l}^{m}1\\
& = & \frac{b^t}{b^{m+d}} \sum_{l=m-t+1}^{d (m-1)}\frac{1}{b^{l+d}} \sum_{a_1,\ldots ,a_d=0\atop a_1+\cdots
+a_d=l}^{m-1}1\\
&  \le & \frac{b^t}{b^{m+2d}}
\sum_{l=m-t+1}^{\infty} {l+d-1 \choose d-1} \frac{1}{b^l},
\end{eqnarray*}
where we used the fact that for fixed $l$ the number of non-negative integer
solutions of $a_1+\cdots +a_d=l$ is given by ${l+d-1 \choose d-1}$. 

It follows from the binomial theorem (see, e.g., \cite[Lemma~6]{DP05}) that for $b>1$ and integers $d,t_0>0$ we have
\begin{equation}\label{est:binom}
\sum_{l=t_0}^{\infty} {l+d-1 \choose d-1} \frac{1}{b^l} \le b^{-t_0} {t_0+d-1 \choose d-1} \left(\frac{b-1}{b}\right)^{-d}. 
\end{equation}

Using \eqref{est:binom} we now obtain
\begin{eqnarray}\label{sum1_asym}
\Sigma_{1} & \le &  \frac{b^t}{b^{m+2d}} \frac{1}{b^{m-t+1}}
{m-t+d \choose d-1}\left(\frac{b-1}{b} \right)^{-d} \nonumber\\
& =&   \frac{b^{2t}}{b^{2m}} \frac{1}{b(b^2-b)^d}
{m-t+d \choose d-1}.
\end{eqnarray}
Finally, since $${m-t+d \choose d-1} = \frac{(m-t+2)(m-t+3)\cdots
(m-t+d)}{1 \cdot 2 \cdots (d-1)}\le (m-t+2)^{d-1},$$ we
obtain $$\Sigma_1 \le  \frac{b^{2t}}{b^{2m}} \frac{1}{b(b^2-b)^d} (m-t+2)^{d-1}.$$

Now we estimate $$\Sigma_{2}:=\sum_{a_1,\ldots ,a_d=1 \atop m-t+1 \le a_1+\cdots +a_d \le m-t+d}^{m} \frac{1}{b^{2(a_1+\cdots +a_d)}}. $$ If $m-t \geq d-1$ we proceed similarly to above and obtain
\begin{eqnarray}\label{sum2_asym}
\Sigma_{2} & = & \sum_{l=m-t-d+1}^{m-t} {l+d-1 \choose d-1} \frac{1}{b^{2(l+d)}}  \nonumber \\
& \le &   \frac{1}{b^{2(m-t+1)}}{ m-t \choose d-1} \left(\frac{b^2-1}{b^2}\right)^{-d} \nonumber \\
& = & \frac{b^{2d}}{b^2(b^2-1)^d} \frac{b^{2t}}{b^{2m}} {m-t \choose d-1}. 
\end{eqnarray}
If $m-t<d-1$, then we have 
\begin{eqnarray}\label{sum2_asym2}
\Sigma_2 &=&   \sum_{l=d}^{m-t+d} \frac{1}{b^{2l}} \sum_{a_1,\ldots,a_d=1 \atop a_1+\cdots+a_d=l}^m 1 \nonumber \\ 
&\le&   \sum_{l=0}^{m-t} \frac{1}{b^{2(l+d)}} \sum_{a_1,\ldots,a_d=0 \atop a_1+\cdots+a_d=l}^{m-1} 1 \nonumber \\ 
& \leq & \frac{1}{b^{2d}} \sum_{l=0}^\infty {l+d-1 \choose d-1} \frac{1}{b^{2l}} \nonumber \\
& = &  \frac{1}{b^{2d}}  \frac{b^{2d}}{(b^2-1)^d} \\
& \le &   \frac{b^{2d}}{b^2 (b^2-1)^d} \frac{b^{2t}}{b^{2m}},
\end{eqnarray}
where we used $b^{2-2d} \le b^{2t-2m}$ in the last step. Hence we have in any case
$$\Sigma_2 \le \frac{b^{2d}}{b^2 (b^2-1)^d} \frac{b^{2t}}{b^{2m}} \max\left(1, {m-t \choose d-1}\right).$$

Putting things together we finally obtain 
\begin{eqnarray*}
\calB([d])& \le & (b^2-b)^d \left(\frac{b^{2d}}{b^2 (b^2-1)^d} \frac{b^{2t}}{b^{2m}} \max\left(1, {m-t \choose d-1}\right) +  \frac{b^{2t}}{b^{2m}} \frac{1}{b(b^2-b)^d} (m-t+2)^{d-1} \right)\\
& \le & \frac{b^{2t}}{b^{2m}} \left(\frac{b^{3 d}}{b^2(b+1)^d} \max\left(1, {m-t \choose d-1}\right) +  \frac{1}{b} (m-t+2)^{d-1}\right)\\
& \le & \frac{b^{2t}}{b^{2m}} \left(\frac{b^{2 d}}{b^2} (m-t)^{d-1} +  \frac{1}{b} (m-t+2)^{d-1}\right)\\
& = &  \frac{b^{2t}}{b^{2m}} (m-t)^{d-1} \left(b^{2 (d-1)}  +  \frac{1}{b} \left(1+\frac{2}{m-t}\right)^{d-1}\right)\\
& \le &  \frac{b^{2t}}{b^{2m}} (m-t)^{d-1} \left(b^{2 (d-1)}  +  \frac{1}{b} 3^{d-1}\right)\\
& \le & \frac{b^{2t}}{b^{2m}} (m-t)^{d-1}  b^{2d}.
\end{eqnarray*}
\end{proof}

Now we can give the proof of Theorem~\ref{thm12}.

\begin{proof}[Proof of Theorem~\ref{thm12}]
Using \eqref{fo:l2perB} and Lemma~\ref{B_bound} we obtain
\begin{eqnarray*}
\EE[(L_{2,b^m}^{{\rm per}}(\widetilde{\cP}))^2] \le \frac{b^{2t}}{3^d}   \sum_{\emptyset \not=\uu \subseteq [d]} (m-t)^{|\uu|-1}  b^{2|\uu|} \le  b^{2t}  (m-t)^{d-1} \left(\frac{1+b^2}{3}\right)^d.
\end{eqnarray*}
This finishes the proof of Theorem~\ref{thm12}.
\end{proof}

Theorem~\ref{thm12} implies that the lower bound \eqref{lbd:Lper} is best possible.

\begin{cor}\label{co16}
Let $p \in [1,2]$. For every $m,d \in \NN$ with $m,d\ge 2$ and every prime number $b\ge d-1$ we have $$\inf_{\cP \subseteq [0,1)^d \atop |\cP|=N} L_{p,N}^{{\rm per}} (\cP) \lesssim_{d,b} (\log N)^{\frac{d-1}{2}}, \quad \quad \mbox{ where $N=b^m$.}$$ Hence, the lower bound from Corollary~\ref{cor:lowper} is best possible for all $p \in (1,2]$.
\end{cor}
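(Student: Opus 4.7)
The plan is to reduce to the case $p=2$ via monotonicity of $L_p$-norms and then apply Theorem~\ref{thm12} to a suitably chosen digital $(0,m,d)$-net. The first step is immediate: for every $p \in [1,2]$ and every $N$-element point set $\cP$ we have $L_{p,N}^{{\rm per}}(\cP) \le L_{2,N}^{{\rm per}}(\cP)$ (as noted right after the definitions of the three discrepancies). Hence it suffices to exhibit, for each $N = b^m$, an $N$-point set whose periodic $L_2$ discrepancy is of order $m^{(d-1)/2} \asymp_b (\log N)^{(d-1)/2}$.

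For the second step I would invoke the classical fact that for every prime $b$ with $b \ge d-1$ there exist digital $(0,m,d)$-nets over $\ZZ_b$ (realized, e.g., by Faure or Niederreiter constructions; this is also consistent with the MacWilliams/orthogonal-array existence bound). Taking such a net $\cP$ and applying Theorem~\ref{thm12} with $t=0$ to its random digital shift $\widetilde{\cP}$ of depth $m$ yields
$$\EE\bigl[(L_{2,b^m}^{{\rm per}}(\widetilde{\cP}))^2\bigr] \le m^{d-1}\left(\frac{1+b^2}{3}\right)^d.$$
A standard averaging argument then guarantees at least one realization of the digital shift for which
$$L_{2,b^m}^{{\rm per}}(\widetilde{\cP}) \le \left(\frac{1+b^2}{3}\right)^{d/2} m^{(d-1)/2}.$$

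Substituting $m = (\log N)/\log b$ and combining with the reduction from the first step gives the claimed upper bound on $\inf_{|\cP|=N} L_{p,N}^{{\rm per}}(\cP)$ with implied constant depending only on $d$ and $b$, and matches the lower bound from Corollary~\ref{cor:lowper}. There is essentially no remaining obstacle: all the delicate work---namely the Walsh expansion of the reproducing kernel $K_d$, the explicit evaluation of the diagonal coefficients in Lemma~\ref{le:rhok}, and the net-counting estimate of Lemma~\ref{B_bound}---has already been absorbed into Theorem~\ref{thm12}. The only external input is the cited existence of digital $(0,m,d)$-nets over $\ZZ_b$ for prime $b \ge d-1$, which is standard and can be quoted directly from, e.g., \cite{DP10} or \cite{niesiam}.
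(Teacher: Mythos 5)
Your proposal is correct and follows essentially the same route as the paper's own proof: reduce to $p=2$ by monotonicity of the $L_p$-norm, quote the existence of digital $(0,m,d)$-nets over $\ZZ_b$ for prime $b\ge d-1$ (Faure/Niederreiter), and apply Theorem~\ref{thm12} with $t=0$ together with an averaging argument to pick a good digital shift. No gaps.
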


\begin{proof}
It is known that for every $m,d \in \NN$, $m,d\ge 2$, and every prime number $b \ge d-1$ there exists a digital $(0,m,d)$-net $\cP$ in base $b$ (constructions are due to Faure~\cite{Fau} and Niederreiter~\cite{nie87}; see also \cite{DP10,LeoPi,niesiam}). According to Theorem~\ref{thm12} there exists a digital shift of depth $m$, such that the digitally shifted point set $\widetilde{\cP}$ has periodic $L_2$ discrepancy $$(L_{2,b^m}^{{\rm per}}(\widetilde{\cP}))^2  \le    \left(\frac{1+b^2}{3}\right)^d m^{d-1} .$$ Taking the square root yields the desired result.
\end{proof}

Note that it can be shown that a digitally shifted digital $(t,m,d)$-net over $\ZZ_b$ is a $(t,m,d)$-net in base $b$ with the same $t$-parameter, however, in general not a digital one (just adapt the proof of \cite[Lemma~3]{DP05a}). So the point set $\widetilde{\cP}$ appearing in the proof of Corollary~\ref{co16} is a $(0,m,d)$-net in base $b$.

\begin{rem}[Diaphony]\label{re:dia}\rm
We already mentioned the relation between periodic $L_2$ discrepancy and diaphony. As usual, see for example  \cite[Section~1.2.3]{DT}, we denote the diaphony by $F_N$, $$F_N(\cP):= \left(\sum_{\bsk \in \ZZ^d\setminus\{\bszero\}} \frac{1}{\rho(\bsk)^2} \left|\frac{1}{N} \sum_{n=0}^{N-1} \exp(2 \pi \icomp \bsk \cdot \bsx_n)\right|^2\right),$$ where for $\bsk=(k_1,\ldots,k_d)\in \ZZ^d$ we set $\rho(\bsk)=\prod_{j=1}^d \max(1,|k_j|)$. Then Theorem~\ref{thm12} also implies that for every $m,d \in \NN$ with $m,d \ge 2$ and every prime number $b\ge d-1$ there exists an $N=b^m$ element point set $\cP$ in $[0,1)^d$ such that 
\begin{equation}\label{bd:diaN}
F_N(\cP) \lesssim_d \frac{(\log N)^{\frac{d-1}{2}}}{N}.
\end{equation}
Previously, such a result was known only for $d=2$; see \cite{groz,X2000}. The bound \eqref{bd:diaN} is best possible in the order of magnitude in $N$. According to \cite{lev99} this has been first shown by Bykovsky~\cite{by} (however, we do not have access to this paper). Later proofs can be also found in \cite{HKP20,lev99}.
\end{rem}

\section{Summary and open problems}\label{sec:sum}

We summarize the main results about the order of magnitude of the minimal $L_p$ discrepancy in $N$ and for fixed dimension $d$.
\begin{itemize}
\item Star and extreme discrepancy: For every $p \in (1,\infty)$ and every $d,N \in \NN$ we have $$\inf_{\cP \subseteq [0,1)^d \atop |\cP|=N} L_{p,N}^{\bullet}(\cP) \asymp_{d,p} (1+\log N)^{\frac{d-1}{2}}, \quad \mbox{where } \bullet \in \{{\rm star, extr}\}.$$ For $d=2$ the result even holds true for $p=1$.
\item Periodic $L_p$ discrepancy: For the periodic $L_p$ discrepancy for every $p \in (1,\infty)$ and every $d,N \in \NN$ we have $$\inf_{\cP \subseteq [0,1)^d \atop |\cP|=N} L_{p,N}^{{\rm per}}(\cP) \gtrsim_{d,p} (1+\log N)^{\frac{d-1}{2}}.$$ Again, for $d=2$ the result holds true also for $p=1$. In the other direction so far we only have the following result. For every $p \in [1,2]$,  every $d,m \in \NN$ with $d,m \ge 2$ and every prime number $b\ge d-1$ we have 
\begin{equation}\label{sum:per}
\inf_{\cP \subseteq [0,1)^d \atop |\cP|=N} L_{p,N}^{{\rm per}}(\cP) \lesssim_{d} (1+\log N)^{\frac{d-1}{2}},\quad \mbox{where $N=b^m$.}
\end{equation}
It remains an open problem to extend \eqref{sum:per} to arbitrary $N \in \NN$ (i.e., not only prime powers) and to arbitrary $p \in [1,\infty)$. In this context we also mention the still open problem of finding for every $N\ge 2$ explicit constructions of $N$-point sets in $[0,1)^d$ with the optimal order of diaphony $F_N$, which is $(\log N)^{\frac{d-1}{2}}/N$.
\end{itemize}

Still there are also some open questions about relations between the three notions of $L_p$ discrepancy. In particular, let $\cP$ be an arbitrary $N$-point set in $[0,1)^d$. Then the following questions arise (see also \cite{HKP20}):
\begin{itemize}
\item Is is true that $L_{p,N}^{{\rm extr}}(\cP) \asymp_{p,d} L_{p,N}^{{\rm per}}(\cP)$? This is true for $d=1$. Note also  that $L_{p,N}^{{\rm extr}}(\cP) \le L_{p,N}^{{\rm per}}(\cP)$.
\item Is it true that $L_{p,N}^{{\rm per}}(\cP) \lesssim_{p,d} L_{p,N}^{{\rm star}}(\cP)$? Note that a corresponding inequality the other way round is not true in general.
\end{itemize}

\noindent {\bf Author's address:} Institute of Financial Mathematics and Applied Number Theory, Johannes Kepler University Linz, Austria, 4040 Linz, Altenberger Stra{\ss}e 69. Email: ralph.kritzinger@yahoo.de, friedrich.pillichshammer@jku.at

\end{document}